\newtheorem{theorem}{Theorem}
\newtheorem{fact}{Fact}
\theoremstyle{definition}
\newtheorem{definition}{Definition}
\theoremstyle{definition}
\newtheorem{example}{Example}
\theoremstyle{definition}
\newtheorem{note}{Note}
\newtheorem{problem}{Problem}
\begin{document}
\title[
Envelopes created by circle families in the plane]
{Envelopes created by circle families in the plane}
\author[Y.~Wang]{Yongqiao Wang
}
\address{
School of Science, Dalian Maritime University, Dalian 116026, P.R. China
}
\email{wangyq@dlmu.edu.cn}
\author[T.~Nishimura]{Takashi Nishimura
}
\address{
Research Institute of Environment and Information Sciences,
Yokohama National University,
Yokohama 240-8501, Japan}
\email{nishimura-takashi-yx@ynu.ac.jp}
\begin{abstract}
In this paper,  on envelopes created by circle families in the plane,
all four basic problems (existence problem, representation problem,
problem on the number of envelopes, problem on relationships of definitions)
are solved.    
\end{abstract}
\subjclass[2020]{51M15, 53A04, 57R45, 58C25} 
\keywords{Circle family, Envelope, Frontal,
Creative, Creator.}


\date{}

\maketitle

\section{Introduction\label{section1}}
Throughout this paper, $I$ is an open interval and all functions,
mappings are of class $C^\infty$ unless otherwise stated.
\medskip
\par
Envelopes of planar regular curve families have fascinated many pioneers
since the dawn of differential analysis (for instance, see \cite{history}).
In most typical cases, straight line families have been studied.     
However, even for envelopes created by straight line falimies, 
there were several unsolved problems  
until very recently.    For instance, as the following 
Example \ref{example0} shows, 
the well-known method to represent the envelope 
for a given straight line family creating an envelope is useless in some cases.   
and the representation problem was open until very recently.    
\begin{example}\label{example0} 
Consider the elementary plane curve $f: \mathbb{R}\to \mathbb{R}^2$ 
defined by $f(t)=\left(t, t^3\right)$.     The regular curve $f$ 
gives a parametrization of the non-singular cubic curve 
\[
C=\left\{(X, Y)\in \mathbb{R}^2\, |\, Y=X^3\right\}.   
\]   
The affine tangent line $L_t$ to $C$ at a point $\left(t, t^3\right)$ 
may be defined by 
\[
\left(X-t, Y-t^3\right)\cdot \left(-3t^2, 1\right)=0,
\] 
where the dot in the center stands for the standard scalar product of 
two vectors $\left(X-t, Y-t^3\right)$ and 
$\left(-3t^2, 1\right)$.  
Since the straight line family $\left\{L_t\right\}_{t\in \mathbb{R}}$ 
is 
the affine tangent line family to $C$, the non-singular cubic curve $C$ must be 
an envelope of $\left\{L_t\right\}_{t\in \mathbb{R}}$. 
Set 
\[
F\left(X, Y, t\right)=\left(X-t, Y-t^3\right)\cdot \left(-3t^2, 1\right)=
-3t^2X+Y+2t^3.   
\]
We have the following.   
\begin{eqnarray*}
\mathcal{D} & = & 
\left\{(X, Y)\in \mathbb{R}^2\: \left|\: \exists t\in \mathbb{R} \mbox{ s.t. }
F(X, Y, t)=\frac{\partial F}{\partial t}(X, Y, t)=0\right.\right\} \\ 
{ } & = & 
\left\{(X, Y)\in \mathbb{R}^2\: \left|\: \exists t\in \mathbb{R} \mbox{ s.t. }
-3t^2X+Y+2t^3=-6t(X-t)=0\right.\right\} \\ 
{ } & = & \left\{(X, Y)\in \mathbb{R}^2\: 
\left|\: Y=0 \mbox{ or }Y=X^3\right.\right\} \\ 
{ } & \supsetneqq & \mathcal{C}.   
\end{eqnarray*}   
Therefore, 
unfortunately, 
the well-known method to represent the envelope does not work well 
in this case and it must be applied under appropriate assumptions.   
\end{example}
In \cite{nishimura}, by solving four basic problems on envelopes created by
straight line families in the plane (existence problem, representation problem,
uniqueness problem and equivalence problem of definitions),
the second author constructs a general theory for envelopes created by
straight line families in the plane.
On the other hand, 
circle families in the plane are non-negligible families  
because the envelopes of them have already had an important application,
namely, an application to
Seismic Survey.
Following 7.14(9) of \cite{brucegiblin}, a brief explanation of Seismic Survey
is given as follows.
In the Euclidean plane $\mathbb{R}^2$,
consider the \lq\lq ground level curve\rq\rq\,
$C$ parametrized by $\gamma: I\to \mathbb{R}^2$.
Suppose that there is a stratum of granite below the top layer of sandstone
and that the dividing curve, denoted by $M$,
is parametrized by $\widetilde{f}: I\to \mathbb{R}^2$.
Seismic Survey is the following
method to obtain an approximation of $\widetilde{f}$
as precisely as possible.
Take one fixed point $A$ of $C$ and consider an explosion at $A$.
Assume that the sound waves travel in straight lines and are reflected from $M$,
arriving back at points $\gamma(t)$ of $C$ where their times of arrival
are exactly recorded by sensors located along $C$ (see Figure 1).
\begin{figure}[h]
\begin{center}
\includegraphics[width=10cm]
{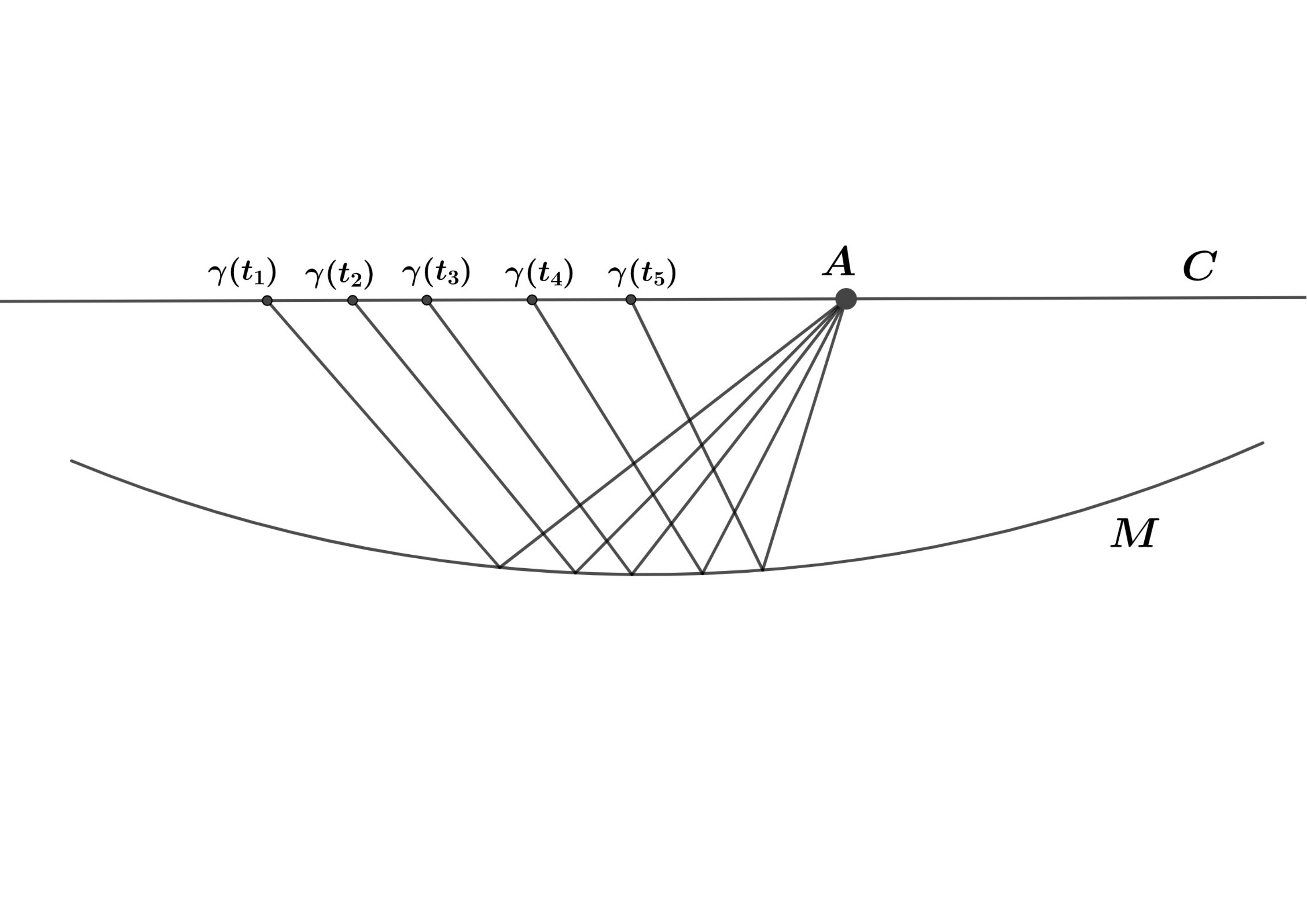}
\caption{Reflection of sound waves.
}
\label{seismic_survey}
\end{center}
\end{figure}
It is known that there exists a curve $W$ parametrized by
$f: I\to \mathbb{R}^2$ with well-defined normals such that
each broken line of a reflected ray starting at $A$ and finishing on $C$
can be replaced by a straight line which is normal to $W$ and of the same
total length.
The curve $W$ is called the \textit{orthotomic} of $M$
relative to $A$ and conversely the curve $M$ is called the
\textit{anti-orthotomic} of $W$ relative to $A$.
Then, an envelope created by the circle family
\[
\left\{\left.(x, y)\in \mathbb{R}^2\, \right|\,
||(x, y) - \gamma(t)||=||f(t)-\gamma(t) ||\right\}_{t\in I}
\]
recovers $W$ (see Figure 2).
\begin{figure}[h]
\begin{center}
\includegraphics[width=10cm]
{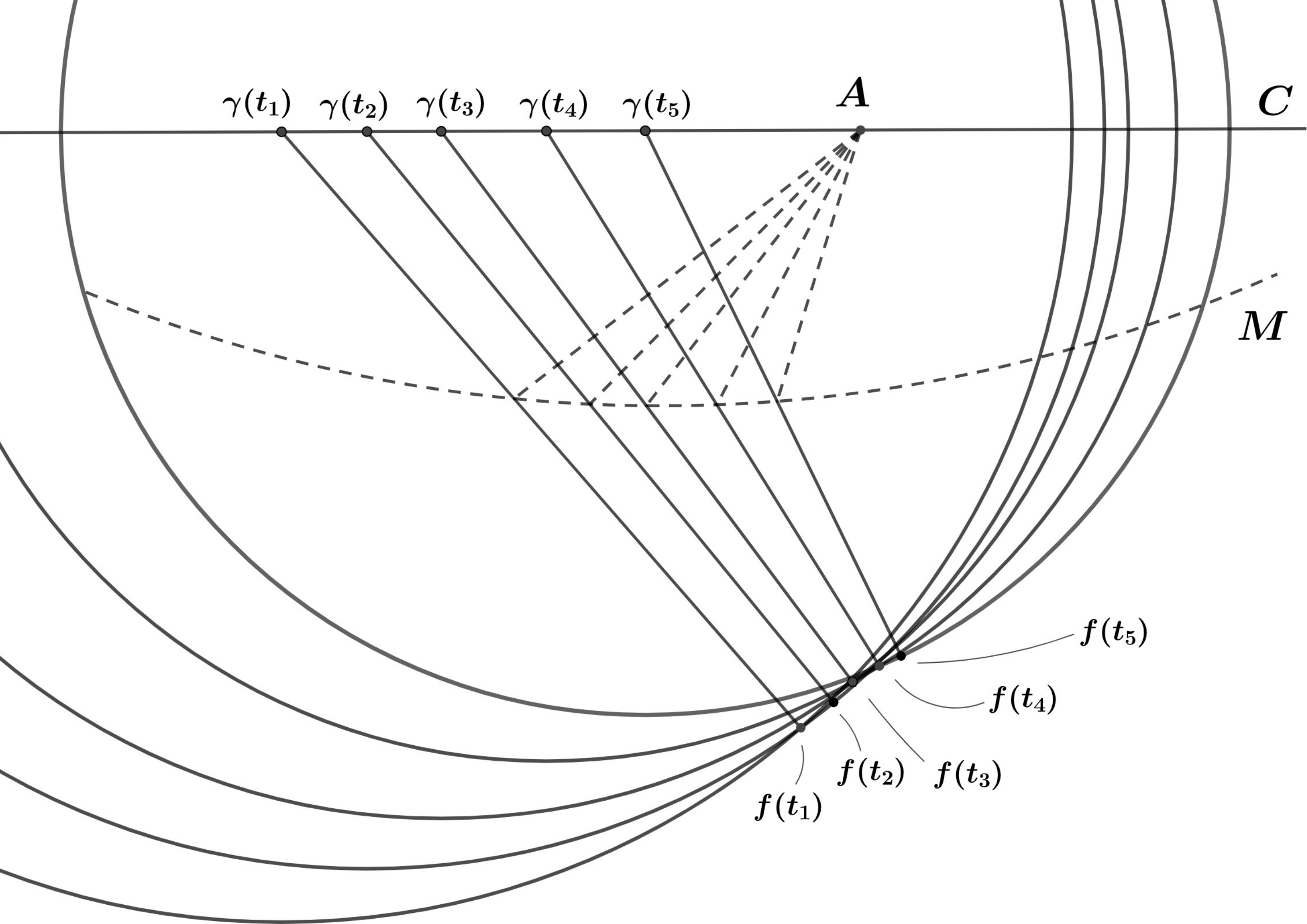}
\caption{An envelope created by the circle family.
}
\label{seismic_survey}
\end{center}
\end{figure}
After obtaining the parametrization $f$ of $W$,
the parametrization $\widetilde{f}$ of $M$ can be easily obtained by using
the anti-orthotomic technique developed in \cite{janeczkonishimura}.
Therefore, in order to investigate the parametrization of $W$
as precisely as possible, 
it is very important to 
construct a general theory on envelopes created by circle families in the plane, 
which is the main purpose of this paper.   
\par
\smallskip
For a point $P$ of $\mathbb{R}^2$
and a positive number $\lambda$,
the circle $C_{(P, \lambda)}$ centered at $P$ with radius $\lambda$
is naturally defined as follows, where the dot in the center stands for
the standard scalar product.
\[
C_{(P, \lambda)}=
\left\{\left.
(x, y)\in \mathbb{R}^2\, \right|\,
\left((x, y) - P\right)\cdot \left((x, y) - P\right) = \lambda^2
\right\}.
\]
For a curve $\gamma: I\to \mathbb{R}^2$ and a positive function
$\lambda: I\to \mathbb{R}_+$,
the circle family $\mathcal{C}_{(\gamma, \lambda)}$
is naturally defined as follows.
Here, $\mathbb{R}_+$ stands for the set consisting of positive real numbers.
\[
\mathcal{C}_{(\gamma, \lambda)}=
\left\{C_{(\gamma(t), \lambda(t))}
\right\}_{t\in I}.
\]
It is reasonable to assume that at each point $\gamma(t)$
the normal vector to the curve $\gamma$ is well-defined.
Thus, we easily reach the following definition.
\begin{definition}\label{definition_frontal}
{\rm
A curve $\gamma: I\to \mathbb{R}^2$ is called a \textit{frontal}
if there exists a mapping $\nu: I\to S^1$ such that
the following identity holds for each $t\in I$, where $S^1$ is the unit circle in
$\mathbb{R}^2$.
\[
\frac{d \gamma}{dt}(t)\cdot \nu(t) =0.
\]
For a frontal $\gamma$,
the mapping $\nu: I\to S^1$ given above is called the
\textit{Gauss mapping} of $\gamma$.
}
\end{definition}
\noindent
By definition, a frontal is a mapping giving a solution 
of the first order linear differential equation
defined by Gauss mapping $\nu$.
Thus, for a fixed mapping $\nu: I\to S^1$
the set consisting of frontals with a given
Gauss mapping $\nu: I\to S^1$ is a linear space.
For frontals, \cite{ishikawa} is recommended as an excellent reference.
Hereafter in this paper, the curve $\gamma: I\to \mathbb{R}^2$ for
a circle family $\mathcal{C}_{(\gamma, \lambda)}$ is assumed to be a frontal.
\par
In this paper,
the following is adopted as the definition of an envelope created
by a circle family.
\begin{definition}\label{definition_envelope}
{\rm
Let $\mathcal{C}_{(\gamma, \lambda)}$ be a circle family.
A mapping $f: I\to \mathbb{R}^2$ is called an
\textit{envelope} created by $\mathcal{C}_{(\gamma, \lambda)}$
if 
the following two hold for any $t\in I$.       
\begin{enumerate}
\item[(1)] $\frac{d f}{d t}(t)\cdot \left(f(t)-\gamma(t)\right)=0$.
\item[(2)] $f(t)\in C_{(\gamma(t),  \lambda(t))}$.
\end{enumerate}
}
\end{definition}
\noindent
By definition,
as same as an envelope created by a hyperplane family (see \cite{nishimura}),
an envelope created by a circle family is a mapping giving a solution 
of a first order differential equation with one constraint condition.
Moreover, again by definition, an envelope $f$ created by a circle family 
$\mathcal{C}_{(\gamma, \lambda)}$ is a frontal 
with Gauss mapping $I\ni t\to \frac{f(t)-\gamma(t)}{||f(t)-\gamma(t)||}\in S^1$.
\begin{problem}\label{problem1}   
Let $\gamma: I\to \mathbb{R}^2$ be a frontal with Gauss mapping 
$\nu: I\to S^1$ and let $\lambda: I\to \mathbb{R}_+$ be a positve function.    
\begin{enumerate}
\item[(1)] 
Find a necessary and sufficient condition for
the circle family $\mathcal{C}_{(\gamma, \lambda)}$ to create an envelope
in terms of $\gamma$, $\nu$ and $\lambda$.
\item[(2)] Suppose that the circle family $\mathcal{C}_{(\gamma, \lambda)}$
creates an envelope.    Then,
find a parametrization of the envelope
created by $\mathcal{C}_{(\gamma, \lambda)}$ 
in terms of $\gamma$, $\nu$ and $\lambda$.   
\item[(3)] Suppose that the circle family $\mathcal{C}_{(\gamma, \lambda)}$
creates an envelope.    Then, 
find a criterion for the number of distinct envelopes 
created by $\mathcal{C}_{(\gamma, \lambda)}$
in terms of $\gamma$, $\nu$ and $\lambda$.
\end{enumerate}
\end{problem}
\begin{note}\label{note1}
\item[(1)] (1) of Problem \ref{problem1} is a problem to seek the
integrability conditions.  There are various cases, for instance
the concentric circle family
$\{\{(x, y)\in \mathbb{R}^2\, |\, x^2+y^2=t^2\}\}_{t\in \mathbb{R}_+}$
does not create an envelope while the parallel-translated circle family
$\{\{(x, y)\in \mathbb{R}^2\, |\, (x-t)^2+y^2=1\}\}_{t\in \mathbb{R}}$
does create two envelopes.   Thus, (1) of Problem \ref{problem1} is significant.
\item[(2)] The following Example \ref{example1} shows that
the well-known method to represent the envelope is useless in this case.
Thus, (2) of Problem \ref{problem1} is important and the positive answer to it is
much desired.
\item[(3)]  The following Example \ref{example2} shows that
there are at least three cases: the case having a unique envelope, the case
having exactly two envelopes and the case having uncountably many envelopes.
Thus, (3) of Problem \ref{problem1} is meaningful and interesting.
\end{note}
\begin{example}\label{example1}
Let $\gamma: \mathbb{R}\to \mathbb{R}^2$ be the mapping defined by
$\gamma(t)=\left(t^3, t^6\right)$.     Set
$\nu(t)=\frac{1}{\sqrt{4t^6+1}}\left(-2t^3, 1\right)$.
It is clear that the mapping $\gamma$ is a frontal with Gauss mapping
$\nu: \mathbb{R}\to S^1$.
Let $\lambda: \mathbb{R}\to \mathbb{R}_+$ be the constant function
defined by $\lambda(t)=1$.
Then, it seems that the circle family
$\mathcal{C}_{(\gamma, \lambda)}$ creates envelopes.
Thus, we can expect that the created envelopes can be obtained
by the well-known method.
Set $F(x, y, t)=\left(x-t^3\right)^2+\left(y-t^6\right)^2-1$.
Then, we have the following.
{\small
\begin{eqnarray*}
\mathcal{D} & = &
\left\{(x, y)\in \mathbb{R}^2\, \left|\,
\exists t \mbox{ such that }F(x,y,t)=\frac{\partial F}{\partial t}(x, y, t)=0
\right.\right\} \\
{ } & = &
\left\{(x, y)\in \mathbb{R}^2\, \left|\,
\exists t \mbox{ such that }
\left(x-t^3\right)^2+\left(y-t^6\right)^2-1=0, \,
-6t^2\left(x-t^3\right)-12t^5\left(y-t^6\right)=0
\right.\right\} \\
{ } & = &
\left\{(x, y)\in \mathbb{R}^2\, \left|\,
\exists t \mbox{ such that }
\left(x-t^3\right)^2+\left(y-t^6\right)^2-1=0, \,
t^2\left(\left(x-t^3\right)+2t^3\left(y-t^6\right)\right)=0
\right.\right\} \\
{ } & = &
\left\{(x, y)\in \mathbb{R}^2\, \left|\,
x^2+y^2=1\right.\right\} \bigcup
\left\{(x, y)\in \mathbb{R}^2\, \left|\,
\left(x-t^3\right)^2+\left(y-t^6\right)^2-1=0, \,
x=t^3-2t^3\left(y-t^6\right)
\right.\right\} \\
{ } & = &
\left\{(x, y)\in \mathbb{R}^2\, \left|\,
x^2+y^2=1\right.\right\} \bigcup
\left\{(x, y)\in \mathbb{R}^2\, \left|\,
\left(-2t^3\left(y-t^6\right)\right)^2+\left(y-t^6\right)^2=1, \,
x=t^3\left(1-2y+2t^6\right)
\right.\right\}
\\
{ } & = &
\left\{(x, y)\in \mathbb{R}^2\, \left|\,
x^2+y^2=1\right.\right\} \bigcup
\left\{\left.
\left(t^3\mp \frac{2t^3}{\sqrt{4t^6+1}}, t^6\pm \frac{1}{\sqrt{4t^6+1}}\right)
\in \mathbb{R}^2\, \right|\, t\in \mathbb{R}
\right\}.
\end{eqnarray*}
}
In Example \ref{example3} of Section \ref{section3},
it turns out that the set $\mathcal{D}$ calculated here is actually larger than
the set of envelopes created by $\mathcal{C}_{(\gamma, \lambda)}$,
namely the unit circle $\left\{(x, y)\in \mathbb{R}^2\, \left|\,
x^2+y^2=1\right.\right\}$ is redundant.
Therefore, 
the well-known method 
to represent the envelopes does not 
work well in general and it must be applied under 
appropriate assumptions even for circle families.     
The circle family $\mathcal{C}_{(\gamma, \lambda)}$ 
and the candidates of its envelope are depicted in
Figure \ref{figure_example1}.
\begin{figure}[h]
\begin{center}
\includegraphics[width=10cm]
{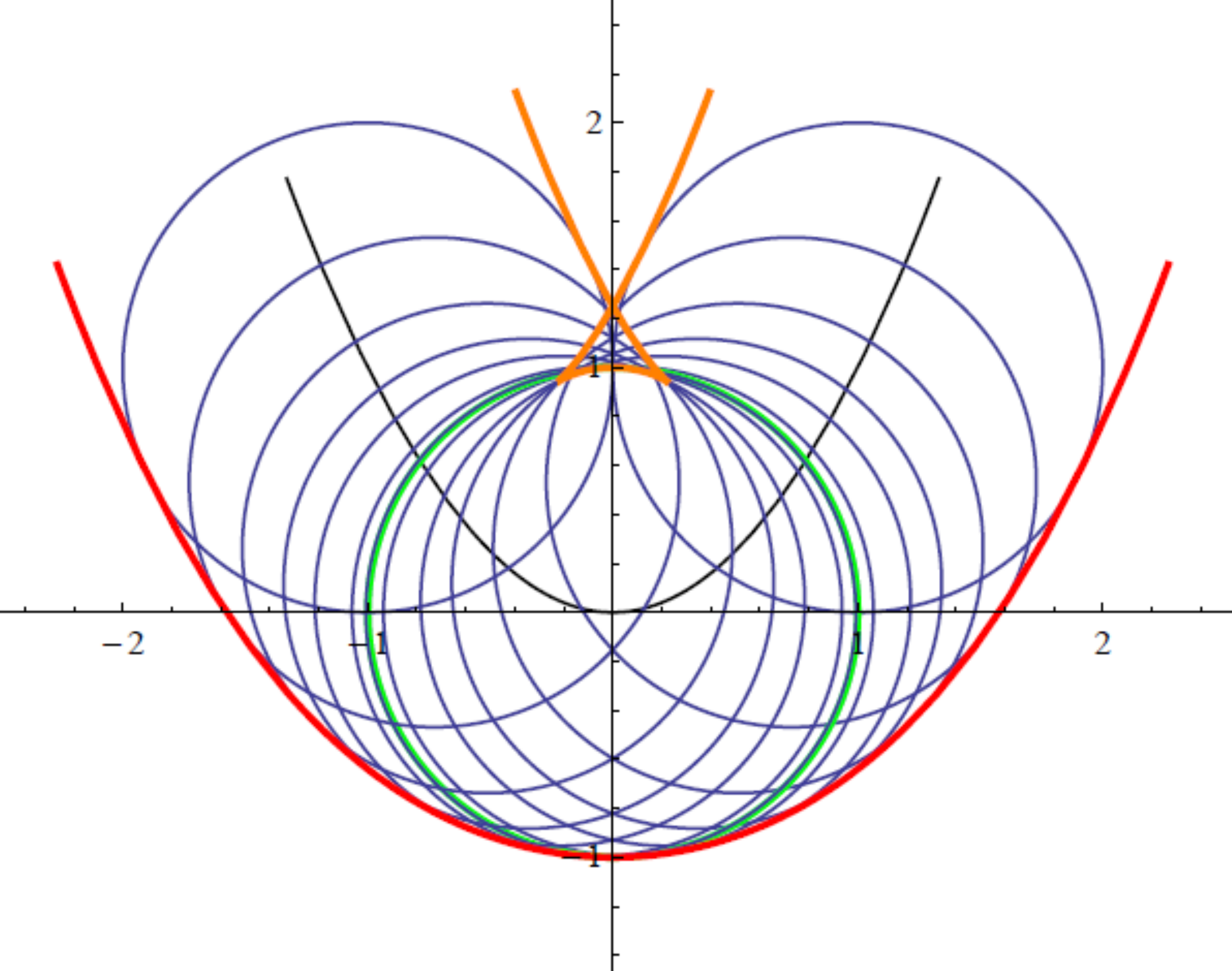}
\caption{The circle family $\mathcal{C}_{(\gamma, \lambda)}$
and the candidates of its envelope.
}
\label{figure_example1}
\end{center}
\end{figure}
\end{example}
\begin{example}\label{example2}
\begin{enumerate}
\item[(1)] Let $\gamma: \mathbb{R}_+\to \mathbb{R}^2$
be the mapping defined by
$\gamma(t)=\left(0, 1+t\right)$.     Then, it is clear that $\gamma$ is a frontal.
Let $\lambda: \mathbb{R}_+\to \mathbb{R}_+$ be the positive function
defined by $\lambda(t)=1+t$.
Then, it is easily seen that the origin $(0,0)$ of the plane $\mathbb{R}^2$
itself is a created envelope by the circle family
$\mathcal{C}_{(\gamma, \lambda)}$
and that there are no other envelopes created by
$\mathcal{C}_{(\gamma, \lambda)}$.
Hence, the number of created envelopes is one
in this case.
\item[(2)] The parallel-translated circle family
$\{\{(x, y)\in \mathbb{R}^2\, |\, (x-t)^2+y^2=1\}\}_{t\in \mathbb{R}}$
creates exactly two envelopes.
\item[(3)]   Let $\gamma: \mathbb{R}\to \mathbb{R}^2$ be the constant
mapping defined by $\gamma(t)=\left(0, 0\right)$.
Then, it is clear that $\gamma$ is a frontal.
Let $\lambda: \mathbb{R}\to \mathbb{R}_+$ be the constant function
defined by $\lambda(t)=1$.
Then, for any function $\theta: \mathbb{R}\to \mathbb{R}$,
the mapping $f: \mathbb{R}\to \mathbb{R}^2$ defined by
$f(t)=(\cos\theta(t), \sin\theta(t))$ is an envelope created by
the circle family $\mathcal{C}_{(\gamma, \lambda)}$.
Hence, there  are uncountably many created envelopes in this case.
\end{enumerate}
\end{example}
\par
In order to solve Problem \ref{problem1},
we prepare several terminologies which can be derived
from a frontal $\gamma: I\to \mathbb{R}^2$ with
Gauss mapping $\nu: I\to S^1$ and
a positive function $\lambda: I\to \mathbb{R}_+$.
For a frontal $\gamma: I\to \mathbb{R}^2$ with Gauss mapping
$\nu: I\to S^1$,
following \cite{fukunagatakahashi},
we set $\mu(t) = J(\nu(t))$, where $J$ is the anti-clockwise
rotation by $\pi/2$.
Then we have a moving frame $\left\{\mu(t), \nu(t)\right\}_{t\in I}$ along
the frontal $\gamma$.
Set
\[
\ell (t)=\frac{d \nu}{dt}(t)\cdot \mu(t), \quad
\beta(t)=\frac{d \gamma}{dt}(t)\cdot \mu(t).
\]
The pair of functions $(\ell, \beta)$ is called the \textit{curvature} of the
frontal $\gamma$ with Gauss mapping $\nu$.
We want to focus the ratio of $\frac{d \lambda}{dt}(t)$ and $\beta(t)$.
The following definition is the key of this paper.
\begin{definition}\label{creative}
{\rm
Let $\gamma: I\to \mathbb{R}^2$,  $\lambda: I\to \mathbb{R}_+$
be a frontal with Gauss mapping $\nu: I\to S^1$ and
a positive function respectively.
Then, the circle family $\mathcal{C}_{(\gamma, \lambda)}$ is said to be
\textit{creative} if there exists a mapping
$\widetilde{\nu}: I\to S^1$ such that the following identity holds for any
$t\in I$.
\[
\frac{d\lambda}{dt}(t)=-\beta(t)\left(\widetilde{\nu}(t)\cdot \mu(t)\right).
\]
Set $\cos\theta(t)=-\widetilde{\nu}(t)\cdot \mu(t)$.    Then, the creative
condition is equivalent to say that
there exists a function $\theta: I\to \mathbb{R}$ satisfying the following identity
for any $t\in I$.
\[
\frac{d\lambda}{dt}(t)=\beta(t)\cos\theta(t).
\]
}
\end{definition}
\noindent
By definition, any family of concentric circles
with smoothly expanding radii is not creative, 
and it is clear that such the circle family
does not create an envelope.
Under the above preparation, Problem \ref{problem1} is solved
as follows.
\begin{theorem}\label{theorem1}
Let $\gamma: I\to \mathbb{R}^2$
be a frontal with Gauss mapping $\nu: I\to S^1$ and let
$\lambda: I\to \mathbb{R}_+$ be a positive function.
Then, the following three hold.     
\begin{enumerate}
\item[(1)] The circle family $\mathcal{C}_{(\gamma, \lambda)}$
creates an envelope if and only if $\mathcal{C}_{(\gamma, \lambda)}$ is creative.
\item[(2)] Suppose that the circle family
$\mathcal{C}_{(\gamma, \lambda)}$ creates
an envelope $f: I\to \mathbb{R}^2$.     Then, the created envelope
$f$ is represented as follows.
\[
f(t) = \gamma(t) + \lambda(t)\widetilde{\nu}(t), 
\]
where $\widetilde{\nu}: I\to S^1$ is the mapping
defined in Definition \ref{creative}.
\item[(3)] Suppose that the circle family
$\mathcal{C}_{(\gamma, \lambda)}$ creates
an envelope.
Then, the number of envelopes created by $\mathcal{C}_{(\gamma, \lambda)}$ is
characterized as follows.
\smallskip
\begin{enumerate}
\item[(3-i)] The circle family
$\mathcal{C}_{(\gamma, \lambda)}$
creates a unique envelope
if and only if
the set consisting of $t\in I$ satisfying $\beta(t)\ne 0$  and
$\frac{d\lambda}{dt}(t)=\pm \beta(t)$ is dense in $I$.
\item[(3-ii)] There are exactly two distinct envelopes created by
$\mathcal{C}_{(\gamma, \lambda)}$ if and only if
the set of $t\in I$ satisfying $\beta(t)\ne 0$ is dense in $I$ and
there exists at least one $t_0\in I$ such that the strict inequality
$|\frac{d\lambda}{dt}(t_0)| < |\beta(t_0)|$ holds.
\item[(3-$\infty$)] There are uncountably many distinct
envelopes created by
$\mathcal{C}_{(\gamma, \lambda)}$ if and only if
the set of $t\in I$ satisfying $\beta(t)\ne 0$ is not dense in $I$.
\end{enumerate}
\end{enumerate}
\end{theorem}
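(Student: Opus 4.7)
The plan is to prove parts (1) and (2) simultaneously by establishing a bijection between envelopes $f$ and creativity-witnesses $\widetilde{\nu}\colon I\to S^1$. Any envelope satisfies $\|f(t)-\gamma(t)\|=\lambda(t)$ by condition (2) of Definition \ref{definition_envelope}, so the mapping $\widetilde{\nu}(t):=(f(t)-\gamma(t))/\lambda(t)$ is well-defined and $S^1$-valued (division is legal since $\lambda>0$). I would then differentiate the relation $f(t)=\gamma(t)+\lambda(t)\widetilde{\nu}(t)$ and substitute into condition (1), noting that $\widetilde{\nu}'\cdot\widetilde{\nu}=0$ because $\widetilde{\nu}$ is $S^1$-valued, and using the frontal decomposition $\frac{d\gamma}{dt}(t)=\beta(t)\mu(t)$ (which follows from $\frac{d\gamma}{dt}\cdot\nu=0$). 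The envelope equation then collapses to
\[
\frac{d\lambda}{dt}(t)=-\beta(t)\bigl(\widetilde{\nu}(t)\cdot\mu(t)\bigr),
\]
which is exactly the creative condition. Reversing the calculation shows that, given any creativity-witness $\widetilde{\nu}$ from Definition \ref{creative}, the formula $f(t)=\gamma(t)+\lambda(t)\widetilde{\nu}(t)$ produces an envelope. This settles (1) and (2) at once, and reduces the counting problem in (3) to counting smooth witnesses $\widetilde{\nu}$.

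For part (3), I would expand $\widetilde{\nu}(t)=a(t)\mu(t)+b(t)\nu(t)$ with $a^2+b^2\equiv 1$. Setting $\rho(t):=\frac{d\lambda/dt(t)}{\beta(t)}$ where $\beta(t)\neq 0$, the creative equation forces $a(t)=-\rho(t)$ on the open set $\{\beta\neq 0\}$, while placing no condition on $\widetilde{\nu}(t)$ at points with $\beta(t)=0$ (there creativity automatically forces $\frac{d\lambda}{dt}(t)=0$). Thus $a$ is uniquely pinned down as a smooth function on the closure of $\{\beta\neq 0\}$, and the counting reduces to two sources of freedom: the sign choice for $b=\pm\sqrt{1-a^2}$, and any freedom for $\widetilde{\nu}$ over the interior of $\{\beta=0\}$.

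In case (3-$\infty$), non-density of $\{\beta\neq 0\}$ supplies an open interval $U$ on which $\beta\equiv 0$; perturbing any given witness by an arbitrary smooth rotation compactly supported in $U$ produces a continuum of distinct envelopes, and conversely such freedom is absent when $\{\beta\neq 0\}$ is dense. In case (3-i), density of $A:=\{\beta\neq 0,\ |d\lambda/dt|=|\beta|\}$ forces $a^2=1$ densely, hence $b\equiv 0$ everywhere by continuity; the smooth function $a$ then takes values in $\{-1,+1\}$ on a dense set, so by connectedness of $I$ it is globally $+1$ or globally $-1$, yielding the unique envelope $f=\gamma\pm\lambda\mu$. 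The reverse directions in (3-i) and (3-ii) would be handled by the trichotomy: either $\{\beta\neq 0\}$ fails to be dense (forcing (3-$\infty$)), or it is dense with $|d\lambda/dt|\equiv|\beta|$ densely on $\{\beta\neq 0\}$ (forcing (3-i)), or it is dense with a strict inequality somewhere (forcing (3-ii)); these three alternatives are mutually exclusive and exhaustive.

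The main obstacle I foresee is verifying ``exactly two'' in (3-ii). On each connected component of $\{a^2<1\}$ the two sign choices $b=\pm\sqrt{1-a^2}$ give two smooth local witnesses, and the question is whether signs on different components can be freely mixed while preserving $C^\infty$ smoothness at boundary points where $a^2=1$. The plan is to fix one witness $b_0$ (supplied by creativity) and analyze any other smooth $b$ with $b^2=b_0^2$: the factorization $(b-b_0)(b+b_0)\equiv 0$, combined with the hypothesis $b_0(t_0)\neq 0$ (supplied by the strict inequality $|d\lambda/dt(t_0)|<|\beta(t_0)|$), forces $b=b_0$ or $b=-b_0$ on each connected component of $\{b_0\neq 0\}$. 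The delicate step will be to show that $C^\infty$-smoothness at the endpoints of these components, where $b_0$ vanishes to finite order, prohibits sign flipping across such points; once this is established, exactly two global sign choices survive, giving the two envelopes $f_\pm=\gamma+\lambda(a\mu\pm b_0\nu)$.
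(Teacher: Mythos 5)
Your handling of (1) and (2) is essentially the paper's own argument: set $\widetilde{\nu}(t)=(f(t)-\gamma(t))/\lambda(t)$, differentiate $f=\gamma+\lambda\widetilde{\nu}$, use $\frac{d\gamma}{dt}=\beta\mu$ and $\widetilde{\nu}\cdot\frac{d\widetilde{\nu}}{dt}=0$, and watch the envelope condition collapse to the creative identity, with the converse obtained by running the computation backwards. Your (3-i) and (3-$\infty$) also match the paper in substance (continuity/connectedness pins $\widetilde{\nu}=\pm\mu$, hence $f=\gamma\pm\lambda\mu$; bump functions over an interval where $\beta\equiv 0$ give uncountably many creators), and organizing the reverse implications through the exhaustive, mutually exclusive trichotomy is a harmless reshuffling of the paper's contradiction arguments.

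The genuine gap is the upper bound in (3-ii), exactly at the step you yourself flag as delicate. Your plan to forbid sign flipping rests on $b_0$ vanishing to \emph{finite} order at the ends of the components of $\{b_0\neq 0\}$, but nothing in the $C^\infty$ hypotheses guarantees this, and at a flat zero the argument does not just get harder, it fails. Concretely, take $I$ a bounded interval around $0$, $\gamma(t)=(t,0)$, $\nu\equiv(0,-1)$, $\mu\equiv(1,0)$, so $\beta\equiv 1$, and choose $\lambda>0$ with $\frac{d\lambda}{dt}(t)=-\sqrt{1-e^{-2/t^2}}$ (smooth, since $\sqrt{1-u}$ is analytic near $u=0$ and $e^{-2/t^2}$ is smooth and flat at $0$). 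The creative identity forces $a(t)=\widetilde{\nu}(t)\cdot\mu(t)=\sqrt{1-e^{-2/t^2}}$, while $b=\widetilde{\nu}\cdot\nu$ only needs to be a smooth function with $b(t)^2=e^{-2/t^2}$; besides $\pm e^{-1/t^2}$, the functions $\pm\operatorname{sign}(t)\,e^{-1/t^2}$ are also $C^\infty$, so there are four distinct smooth creators and hence four distinct envelopes, even though the (3-ii) criterion (density of $\{\beta\neq0\}$ plus a strict inequality at some $t_0$) holds. So sign mixing across a flat zero of $b_0$ is a real phenomenon and cannot be excluded by any finite-order argument; your plan as written cannot deliver ``exactly two'' without an extra hypothesis (e.g.\ real-analyticity, or finite-order vanishing of $\beta^2-(\frac{d\lambda}{dt})^2$). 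For comparison, the paper's own proof of (3-ii) is no more complete here: it concludes ``exactly two'' directly from the pointwise existence of two choices of $\widetilde{\nu}(t)$ on an interval and never addresses the global sign-mixing issue, so this is a genuine subtlety of the statement rather than an idea you failed to recover from the paper.
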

\noindent
By the assertion (2) of Theorem \ref{theorem1}, it is reasonable to call
$\widetilde{\nu}\,$ the \textit{creator} for an envelope $f$ created by
$\mathcal{C}_{(\gamma, \lambda)}$.
\bigskip
\par
This paper is organized as follows.
Theorem \ref{theorem1} is proved in Section \ref{section2}.
In Section \ref{section3},
several examples to which
Theorem \ref{theorem1} 
is effectively applicable are given.
Finally, in Section 4, relations of several definitions of an envelope created by
a circle family are investigated.
\section{Proof of Theorem \ref{theorem1}}\label{section2}
\subsection{Proof of the assertion (1) of Theorem \ref{theorem1}}
\label{subsection2.1}
Suppose that $\mathcal{C}_{(\gamma, \lambda)}$ is creative.
By definition, there exists a mapping $\widetilde{\nu}: I\to S^1$
such that the equality
$\frac{d \lambda}{dt}(t)=
-\beta(t)\left(\widetilde{\nu}(t)\cdot \mu(t)\right)$ holds for any $t\in I$.
Set
\[
f(t) = \gamma(t)+\lambda(t)\widetilde{\nu}(t).
\]
Then, since
$\left(f(t)-\gamma(t)\right)\cdot \left(f(t)-\gamma(t)\right)=\lambda^2(t)$,
it follows $f(t)\in C_{(\gamma(t), \lambda(t))}$.
Moreover, since
\[
\frac{d f}{dt}(t)=\frac{d \gamma}{dt}(t) +
\frac{d \lambda}{dt}(t)\widetilde{\nu}(t)
+\lambda(t)\frac{d \widetilde{\nu}}{dt}(t),
\]
we have the following.
\begin{eqnarray*}
{ } & { } & \frac{d f}{dt}(t)\cdot \left(f(t)-\gamma(t)\right) \\
{ } & = & \left(\frac{d \gamma}{dt}(t) +
\frac{d \lambda}{dt}(t)\widetilde{\nu}(t)
+\lambda(t)\frac{d \widetilde{\nu}}{dt}(t)\right)\cdot
\left(\lambda(t)\widetilde{\nu}(t)\right) \\
{ } & = & \frac{d \gamma}{dt}(t)\cdot \left(\lambda(t)\widetilde{\nu}(t)\right)
+
\frac{d \lambda}{dt}(t)\lambda(t) \\
{ } & = &
\left(\beta(t)\mu(t)\right)\cdot \left(\lambda(t)\widetilde{\nu}(t)\right)
+
\left(-\beta(t)\left(\widetilde{\nu}(t)\cdot \mu(t)\right)\right)\lambda(t) \\
{ } & = &
\beta(t)\lambda(t)\left(\mu(t)\cdot \widetilde{\nu}(t)\right)
-
\beta(t)\lambda(t)\left(\widetilde{\nu}(t)\cdot \mu(t)\right) \\
{ } & = & 0.
\end{eqnarray*}
Hence, $f$ is an envelope created by the circle family
$\mathcal{C}_{(\gamma, \lambda)}$.
\par
Conversely, suppose that the circle family
$\mathcal{C}_{(\gamma, \lambda)}$ creates an envelope $f: I\to \mathbb{R}$.
Then, by definition, it follows that
$f(t)\in C_{(\gamma(t), \lambda(t))}$ and
$\frac{d f}{d t}(t)\cdot \left(f(t)-\gamma(t)\right)=0$.
The condition $f(t)\in C_{(\gamma(t), \lambda(t))}$ implies
that there exists a mapping $\widetilde{\nu}: I\to S^1$ such that
the following equality holds for any $t\in I$.
\[
f(t)=\gamma(t)+\lambda(t)\widetilde{\nu}(t).
\]
Then, since
\[
\frac{d f}{dt}(t)=\frac{d \gamma}{dt}(t) +
\frac{d \lambda}{dt}(t)\widetilde{\nu}(t)
+\lambda(t)\frac{d \widetilde{\nu}}{dt}(t),
\]
we have the following.
\begin{eqnarray*}
0 & = & \frac{df}{dt}(t)\cdot \left(f(t)-\gamma(t)\right) \\
{ } & = &
\left(\frac{d \gamma}{dt}(t) +
\frac{d \lambda}{dt}(t)\widetilde{\nu}(t)
+\lambda(t)\frac{d \widetilde{\nu}}{dt}(t)\right)
\cdot
\left(\lambda(t)\widetilde{\nu}(t)\right) \\
{ } & = &
\left(\beta(t)\mu(t)\right)\cdot \left(\lambda(t)\widetilde{\nu}(t)\right)
+ \frac{d \lambda}{dt}(t)\lambda(t) \\
{ } & = &
\lambda(t)\left(\beta(t)\left(\mu(t)\cdot \widetilde{\nu}(t)\right)
+ \frac{d\lambda}{dt}(t)\right).
\end{eqnarray*}
Since $\lambda(t)$ is positive for any $t\in I$, it follows
\[
\beta(t)\left(\mu(t)\cdot \widetilde{\nu}(t)\right)
+ \frac{d\lambda}{dt}(t)=0.
\]
Therefore, the circle family $\mathcal{C}_{(\gamma, \lambda)}$ is creative.
\hfill $\Box$
\par
\medskip
\noindent
\subsection{Proof of the assertion (2) of Theorem \ref{theorem1}}
\label{subsection2.2}
The proof of the assertion (1) given in Subsection \ref{subsection2.1} proves
the assertion (2) as well.
\hfill $\Box$
\par
\medskip
\noindent
\subsection{Proof of the assertion (3) of Theorem \ref{theorem1}}
\label{subsection2.3}
\subsubsection{Proof of (3-i)}
Suppose that the circle family $\mathcal{C}_{(\gamma, \lambda)}$
creates a unique envelope.
Then, for any $t\in I$ the unit vector $\widetilde{\nu}(t)$ satisfying
\[
\frac{d\lambda}{dt}(t)=-\beta(t)\left(\widetilde{\nu}(t)\cdot\mu(t)\right)
\]
must be uniquely determined.     Hence, under considering continuity
of two functions $\frac{d\lambda}{dt}$ and $\beta$, it follows that
the set consisting of $t\in I$ satisfying
$\frac{d\lambda}{dt}(t)=\pm \beta(t)\ne 0$ must be dense in $I$.
\par
Conversely,
suppose that the set consisting of $t\in I$ satisfying
$\frac{d\lambda}{dt}(t)=\pm \beta(t)\ne 0$ is dense in $I$.
Then,  under considering continuity of
the function $t\mapsto \widetilde{\nu}(t)\cdot\mu(t)$, it follows that
$\widetilde{\nu}(t)\cdot\mu(t)=\pm 1$ for any $t\in I$.
Thus, the created envelope
$f(t)=\gamma(t)+\lambda(t)\widetilde{\nu}(t)$ must be unique.
\hfill $\Box$
\par
\smallskip
\noindent
\subsubsection{Proof of (3-ii)}
Suppose that there are exactly two distinct envelopes created by
$\mathcal{C}_{(\gamma, \lambda)}$.
Then, by the equality
$
\frac{d\lambda}{dt}(t)=-\beta(t)\left(\widetilde{\nu}(t)\cdot\mu(t)\right),
$
the set consisting of $t\in I$ satisfying
$\beta(t)\ne 0$ must be dense in $I$.
Suppose moreover that
the equality $\frac{d\lambda}{dt}(t)=\pm \beta(t)$ holds for any $t\in I$.
Then, it follows that the set consisting of $t\in I$ satisfying
$\frac{d\lambda}{dt}(t)=\pm \beta(t)\ne 0$ is dense in $I$.
Then, by the assertion (3-i), the given circle family must create a unique
envelope.     This contradicts the assumption that
there are exactly two distinct envelopes.
Hence, there must exist at least one $t_0\in I$ such that the strict inequality
 $|\frac{d\lambda}{dt}(t_0)|<|\beta(t_0)|$ holds.
\par
Conversely, suppose that
the set of $t\in I$ satisfying $\beta(t)\ne 0$ is dense in $I$ and
there exists at least one $t_0\in I$ such that the strict inequality
$|\frac{d\lambda}{dt}(t_0)| < |\beta(t_0)|$ holds.
Then, it follows that there must exist an open interval
$\widetilde{I}$ in $I$ such that  the absolute value
$|\widetilde{\nu}(t)\cdot\mu(t)|=|\cos\theta(t)|$
is less than $1$ for any
$t\in \widetilde{I}$.  Thus, it follows
$\theta(t)\ne -\theta(t)$
for any  $t\in \widetilde{I}$.
Hence, for any  $t\in \widetilde{I}$,
there exist exactly two distinct unit vectors
$\widetilde{\nu}_+(t), \widetilde{\nu}_-(t)$  corresponding
$\widetilde{\nu}_+(t)\cdot\mu(t)=-\cos\theta(t)$ and
$\widetilde{\nu}_-(t)\cdot\mu(t)=-\cos\left(-\theta(t)\right)$
respectively.
Therefore, by the assertion (2) of Theorem \ref{theorem1},  
the circle family must create
exactly two distinct envelopes.
\hfill $\Box$
\par
\smallskip
\noindent
\subsubsection{Proof of (3-$\infty$)}
Suppose that there are uncountably many distinct
envelopes created by
$\mathcal{C}_{(\gamma, \lambda)}$.
Suppose moreover that the set of $t\in I$ such that
$\beta(t)\ne 0$ is dense in $I$.   Then, from
(3-i) and (3-ii), it follows that
the circle family $\mathcal{C}_{(\gamma, \lambda)}$ must create
a unique envelope or two distinct envelopes.
This contradicts the assumption that  there are uncountably many distinct
envelopes created by
$\mathcal{C}_{(\gamma, \lambda)}$.
Hence, the set of $t\in I$ such that
$\beta(t)\ne 0$ is never dense in $I$.
\par
Conversely, suppose that
the set of $t\in I$ such that
$\beta(t)\ne 0$ is not dense in $I$.
This assumption implies that there exists an open interval
$\widetilde{I}$ in $I$ such that $\beta(t)=0$ for any
$t\in \widetilde{I}$.
On the other hand, since $\mathcal{C}_{(\gamma, \lambda)}$ creates
an envelope $f_0$, the equality
\[
\frac{d\lambda}{dt}(t)=-\beta(t)\left(\widetilde{\nu}(t)\cdot\mu(t)\right)
\]
holds for any $t\in I$.
Thus, there are no restrictions for the value
$\widetilde{\nu}(t)\cdot \mu(t)$ for any $t\in \widetilde{I}$.
Take one point $t_0$ of $\widetilde{I}$ and denote
the $\widetilde{\nu}$ for the envelope $f_0$ by
$\widetilde{\nu}_0$.
Then, by using the standard technique on bump functions,
we may construct uncountably many distinct creators
$\widetilde{\nu}_a : I\to S^1$ $(a\in A)$
such that the following (a), (b), (c) and (d) hold,
where $A$ is a set consisting uncountably many elements such that
$0\not\in A$.
\begin{enumerate}
\item[(a)] The equality
$\frac{d\lambda}{dt}(t)=
-\beta(t)\left(\widetilde{\nu}_a(t)\cdot\mu(t)\right)$
holds for any $t\in I$ and any $a\in A$.
\item[(b)] For any $t\in I-\widetilde{I}$ and any $a\in A$,
the equality
$\widetilde{\nu}_a(t)=\widetilde{\nu}_0(t)$ holds.
\item[(c)] For any $a\in A$, the property
$\widetilde{\nu}_a(t_0)\ne \widetilde{\nu}_0(t_0)$ holds.
\item[(d)] For any two distinct $a_1, a_2\in A$, the property
$\widetilde{\nu}_{a_1}(t_0)\ne
\widetilde{\nu}_{a_2}(t_0)$ holds.
\end{enumerate}
Therefore, by the assertion (2) of Theorem \ref{theorem1},  
the circle family  $\mathcal{C}_{(\gamma, \lambda)}$ creates
uncountably many distinct envelopes.
\hfill $\Box$
\par
\smallskip
\noindent
\section{Examples}\label{section3}
\begin{example}\label{example3}
We examine Example \ref{example1} by applying Theorem \ref{theorem1}.
In Example \ref{example1}, $\gamma: \mathbb{R}\to \mathbb{R}^2$
is given by $\gamma(t)=\left(t^3, t^6\right)$.     Thus, we can say that
$\nu: \mathbb{R}\to S^1$ and $\mu: \mathbb{R}\to S^1$ are given by
$\nu(t)=\frac{1}{\sqrt{4t^6+1}}\left(-2t^3, 1\right)$ and
$\mu(t)=\frac{1}{\sqrt{4t^6+1}}\left(-1, -2t^3\right)$ respectively.
Moreover, the radius function
$\lambda: \mathbb{R}\to \mathbb{R}$ is the constant
function defined by $\lambda(t)=1$.
Thus,
\[
\frac{d \lambda}{dt}(t)=0.
\]
By calculation, we have
\[
\beta(t)=\frac{d \gamma}{d t}(t)\cdot \mu(t) =\frac{-3t^2(1+4t^6)}{\sqrt{4t^6+1}}.
\]
Therefore, the unit vector $\widetilde{\nu}(t)\in S^1$ satisfying
\[
\frac{d\lambda}{d t}(t) =
-\beta(t)\left(\widetilde{\nu}(t)\cdot \mu(t)\right)
\]
exists and it must have the form
\[
\widetilde{\nu}(t)=\pm \nu(t)=\frac{\pm 1}{\sqrt{4t^6+1}}\left(-2t^3, 1\right).
\]
Hence, by the assertion (1) of Theorem \ref{theorem1}, the circle family
$\mathcal{C}_{(\gamma, \lambda)}$ creates an envelope
$f: \mathbb{R}\to \mathbb{R}^2$.
By the assertion (2) of Theorem \ref{theorem1}, $f$ is parametrized as follows.
\begin{eqnarray*}
f(t) & = & \gamma(t)+\lambda(t)\widetilde{\nu}(t) \\
{ } & = & \left(t^3, t^6\right) \pm \frac{1}{\sqrt{4t^6+1}}\left(-2t^3, 1\right) \\
{ } & = &
\left(
t^3\mp \frac{2t^3}{\sqrt{4t^6+1}},\, t^6\pm \frac{1}{\sqrt{4t^6+1}}
\right).
\end{eqnarray*}
Finally, by the assertion (3-ii) of Theorem \ref{theorem1},
the number of distinct envelopes created by the
circle family $\mathcal{C}_{(\gamma, \lambda)}$ is exactly two.
\par
Therefore, Theorem \ref{theorem1} reveals that
the set $\mathcal{D}$ calculated in Example \ref{example1} is
certainly the union of the unit circle and the set of two envelopes of
$\mathcal{C}_{(\gamma, \lambda)}$.
\end{example}
\begin{example}\label{example4}
We examine (1) of Example \ref{example2} by applying Theorem \ref{theorem1}.
In (1) of Example \ref{example2}, $\gamma: \mathbb{R}_+\to \mathbb{R}^2$
is given by $\gamma(t)=\left(0, 1+t\right)$.
Thus, if we define the unit vector $\nu(t)=(1, 0)$,
$\nu: \mathbb{R}_+\to S^1$ gives the Gauss mapping of $\gamma$.
By definition, $\mu(t)=(0,1)$ and thus we have
$\beta(t)=\frac{d\gamma}{d t}(t)\cdot \mu(t)=1$.
On the other hand, the radius function
$\lambda: \mathbb{R}_+\to \mathbb{R}_+$ has the form
$\lambda(t)=1+t$ in this example.
Thus, the creative condition
\[
\frac{d\lambda}{dt}(t) =
-\beta(t)\left(\widetilde{\nu}(t)\cdot \mu(t)\right)
\]
simply becomes 
\[
1 =
-\left(\widetilde{\nu}(t)\cdot (0, 1)\right)
\leqno{(*)}
\]
in this case.
If we set $\widetilde{\nu}(t)=(0,-1)$, then the above equality holds for any
$t\in \mathbb{R}_+$.     
Thus, by the assertion (1) of Theorem \ref{theorem1}, the circle 
family $\mathcal{C}_{(\gamma, \lambda)}$ creates an envelope.
By the assertion (2) of Theorem \ref{theorem1},
the parametrization of the created envelope is
\[
f(t)=\gamma(t)+\lambda(t)\widetilde{\nu}(t)
= \left(0, 1+t\right)+\left(1+t\right)\left(0, -1\right) =\left(0, 0\right).
\]
Finally, notice that for any
$t\in \mathbb{R}_+$
the creative condition (*) in this case
holds if and only if $\widetilde{\nu}(t)=(0,-1)=-\mu(t)$.
Thus, by the assertion (3-i) of Theorem \ref{theorem1},
the origin $(0,0)$  is the unique envelope created by
$\mathcal{C}_{(\gamma, \lambda)}$.
\end{example}
\begin{example}\label{example5}
Theorem \ref{theorem1} can be applied also
to (2) of Example \ref{example2} as follows.
In this example,
$\gamma(t)=(t, 0)$ and $\lambda(t)=1$.
Thus, we may set $\nu(t)=(0, -1)$, $\mu(t)=(1, 0)$.
It follows $\beta(t)=\frac{d\gamma}{d t}(t)\cdot \mu(t)=1$.
Since the radius function $\lambda$ is a constant function,
the creative condition
\[
\frac{d\lambda}{dt}(t) =
-\beta(t)\left(\widetilde{\nu}(t)\cdot \mu(t)\right)
\]
simply becomes 
\[
0 =
-\left(\widetilde{\nu}(t)\cdot (0, 1)\right)
\]
in this case.
Thus, for any $t\in \mathbb{R}$,
the creative condition is satisfied if and only if
$\widetilde{\nu}(t)=\pm (1,0)$.
Hence, by the assertion (1) of Theorem \ref{theorem1}, the circle
family $\mathcal{C}_{(\gamma, \lambda)}$ creates an envelope.
By the assertion (2) of Theorem \ref{theorem1},
the parametrization of the created envelope is
\[
f(t)=\gamma(t)+\lambda(t)\widetilde{\nu}(t)
= \left(t, 0\right)\pm\left(0, -1\right) =\left(t, \mp 1\right).
\]
Finally, by the assertion (3-ii) of Theorem \ref{theorem1},
the number of envelope created by
$\mathcal{C}_{(\gamma, \lambda)}$ is exactly two.
\end{example}
\begin{example}\label{example6}
Theorem \ref{theorem1} can be applied even
to (3) of Example \ref{example2} as follows.
In this example,
$\gamma(t)=(0, 0)$ and $\lambda(t)=1$.
Thus, every mapping $\nu: \mathbb{R}\to S^1$ can be taken
as Gauss mapping of $\gamma$.   In particular,
$\gamma$ is a frontal.
We have $\beta(t)=\frac{d\gamma}{d t}(t)\cdot \mu(t)=0$.
Since the radius function $\lambda$ is a constant function $\lambda(t)=1$,
the creative condition
\[
\frac{d\lambda}{dt}(t) =
-\beta(t)\left(\widetilde{\nu}(t)\cdot \mu(t)\right)
\] 
simply becomes 
\[
0 =
0
\]
in this case.
Thus, for any $\widetilde{\nu}: \mathbb{R}\to S^1$,
the creative condition is satisfied.
Hence, by the assertion (1) of Theorem \ref{theorem1}, the circle
family $\mathcal{C}_{(\gamma, \lambda)}$ creates an envelope.
By the assertion (2) of Theorem \ref{theorem1},
the parametrization of the created envelope is
\[
f(t)=\gamma(t)+\lambda(t)\widetilde{\nu}(t)
= \left(0, 0\right)+\widetilde{\nu}(t)=\widetilde{\nu}(t).
\]
Finally, by the assertion (3-$\infty$) of Theorem \ref{theorem1},
there are uncountably many distinct envelope created by
$\mathcal{C}_{(\gamma, \lambda)}$.
\end{example}
\begin{example}\label{example7}
Let $\gamma: \mathbb{R}_+\to \mathbb{R}^2$ be the mapping defined by
$\gamma(t)=(t, 0)$ and let $\lambda: \mathbb{R}_+\to \mathbb{R}_+$
be the positive function defined by $\lambda(t)=t^2$.
The circle family $\mathcal{C}_{(\gamma, \lambda)}$
and the candidate of its envelope is depicted in
Figure \ref{figure_example7}.
\begin{figure}[h]
\begin{center}
\includegraphics[width=10cm]
{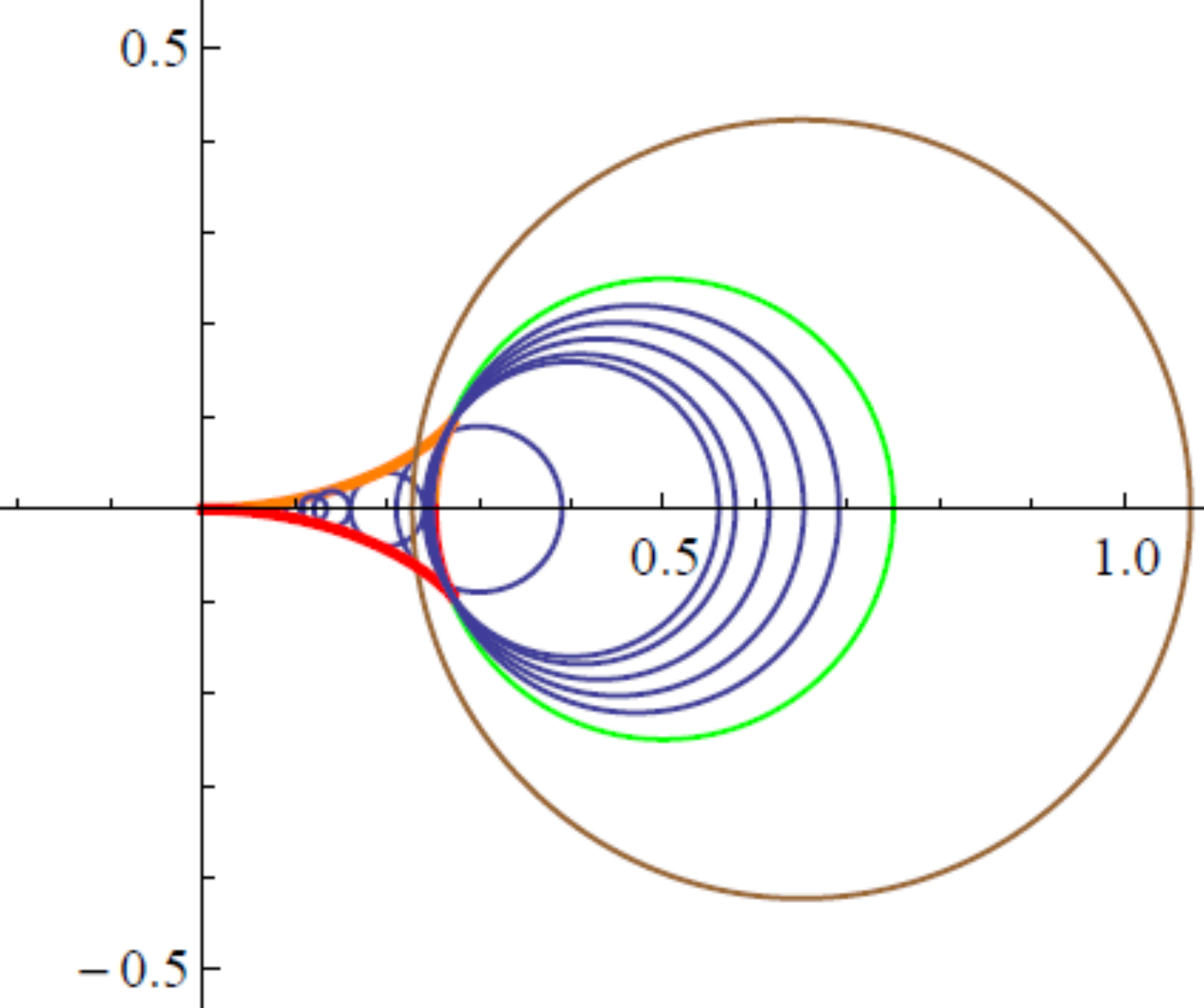}
\caption{The circle family $\mathcal{C}_{(\gamma, \lambda)}$
and the candidate of its envelope.
}
\label{figure_example7}
\end{center}
\end{figure}
Defining the mapping
$\nu: \mathbb{R}_+\to S^1$ by $\nu(t)=(0, -1)$ clarifies that
the mapping $\gamma$ is a frontal.     Then,
$\mu(t)=J(\nu(t))=(1, 0)$ and
$\beta(t)=\frac{d \gamma}{d t}(t)\cdot \mu(t)=(1, 0)\cdot (1, 0)=1$.
We want to seek a mapping $\widetilde{\nu}: \mathbb{R}_+\to S^1$ satisfying
\[
\frac{d\lambda}{dt}(t) = -\beta(t)\left(\widetilde{\nu}(t)\cdot \mu(t)\right),
\]
namely, a mapping $\widetilde{\nu}: \mathbb{R}_+\to S^1$ satisfying
\[
2t=-(\left(\widetilde{\nu}(t)\cdot (1,0)\right)).
\]
Since $\widetilde{\nu}(t)\in S^1$, from the above expression,
it follows that such $\widetilde{\nu}(t)$ does not exist if $\frac{1}{2}<t$.
Thus, the circle family $\mathcal{C}_{(\gamma, \lambda)}$ is not creative
and it creates no envelopes by the assertion 
(1) of Theorem \ref{theorem1}.
\end{example}
\begin{example}\label{example8}
This example is almost the same as Example \ref{example7}.
The difference from Example \ref{example7} is only the parameter space.
In Example \ref{example8}, the parameter space $I$ is
$\left(0, \frac{1}{2}\right)$.
That is to say, in this example, $\mathbb{R}_+$ in Example \ref{example7}
is replaced by $\left(0, \frac{1}{2}\right)$ and all other settings in
Example \ref{example7} remain without change.
\par
Then, from calculations in Example \ref{example7}, it follows that
the given circle family $\mathcal{C}_{(\gamma, \lambda)}$ is creative.
Thus, by the assertion (1) of Theorem \ref{theorem1},
$\mathcal{C}_{(\gamma, \lambda)}$ creates an envelope.
It is easily seen that the expression of $\widetilde{\nu}(t)$ must be
as follows.
\[
\widetilde{\nu}(t)=\left(-2t, \pm\sqrt{1-4t^2}\right).
\]
Therefore, by the assertion (2) of Theorem \ref{theorem1}, an envelope $f$
created by $\mathcal{C}_{(\gamma, \lambda)}$ is
parametrized as follows.
\begin{eqnarray*}
f(t) & = & \gamma(t)+\lambda(t)\widetilde{\nu}(t) \\
{ } & = &
(t, 0)+t^2\left(-2t,\, \pm\sqrt{1-4t^2}\right)  \\
{ } & = &
\left(t-2t^3,\, \pm t^2\sqrt{1-4t^2}\right).
\end{eqnarray*}
Finally, by the assertion (3-ii) of Theorem \ref{theorem1}, it follows that
the number of distinct envelopes created by the
circle family $\mathcal{C}_{(\gamma, \lambda)}$ is exactly two.
\end{example}
\begin{example}\label{example9}
Let $\gamma: \mathbb{R}\to \mathbb{R}^2$ be the mapping defined by
$\gamma(t)=(t^3, t^2)$ and let $\lambda: \mathbb{R}\to \mathbb{R}_+$ be the
constant function defined by $\lambda(t)=1$.
The circle family $\mathcal{C}_{(\gamma, \lambda)}$
and the candidates of its envelope is depicted in
Figure \ref{figure_example9}.
\begin{figure}[h]
\begin{center}
\includegraphics[width=10cm]
{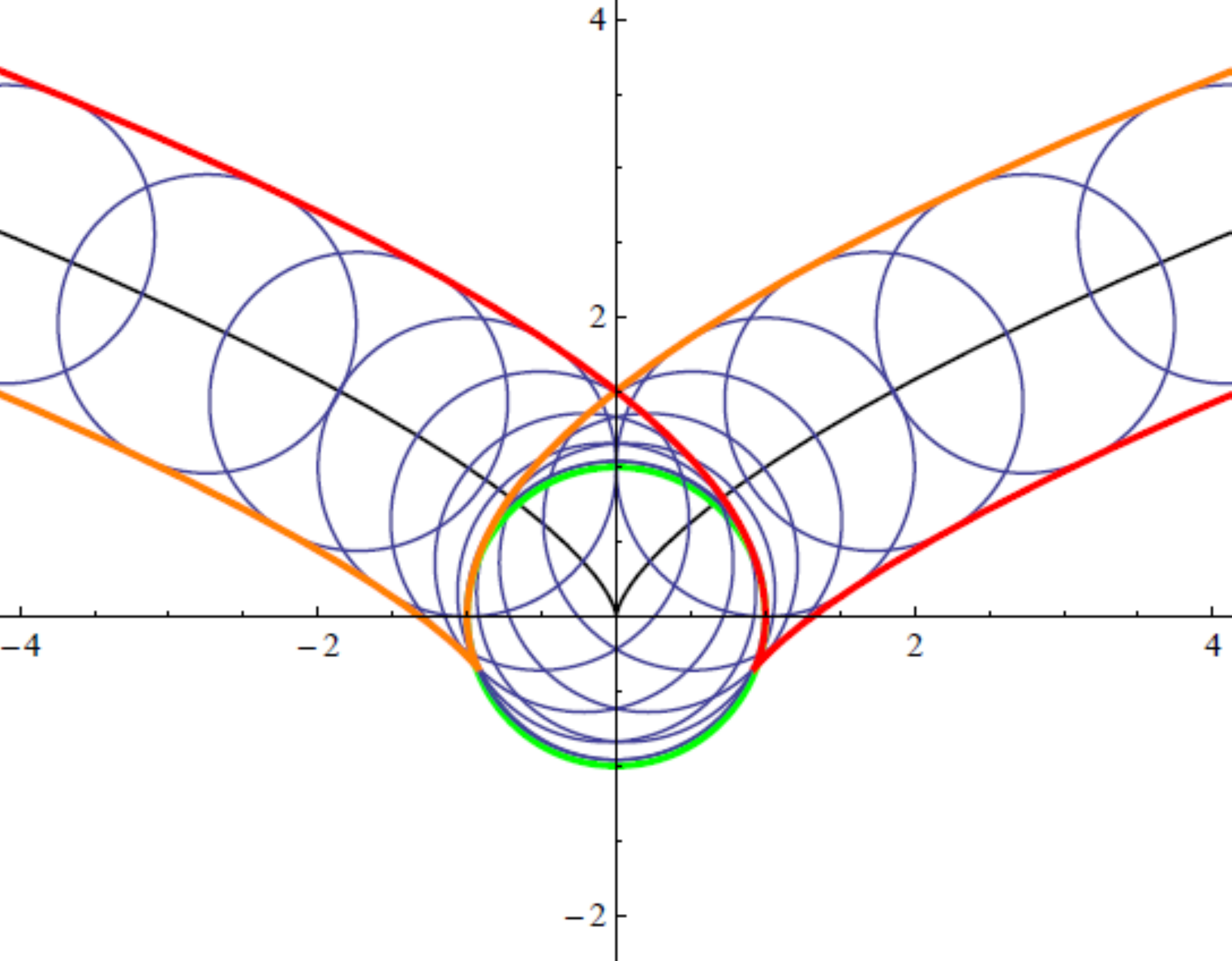}
\caption{The circle family $\mathcal{C}_{(\gamma, \lambda)}$
and the candidate of its envelope.
}
\label{figure_example9}
\end{center}
\end{figure}
It is easily seen that the mapping
$\nu: \mathbb{R}\to S^1$ defined by
$\nu(t)=\frac{1}{\sqrt{4+9t^2}}\left(2, -3t\right)$ gives the Gauss mapping
for $\gamma$.    Thus, $\gamma$ is a frontal.
By definition, the mapping $\mu: \mathbb{R}\to S^1$ has the form
$\mu(t)=\frac{1}{\sqrt{4+9t^2}}\left(3t, 2\right)$.
By calculation, we have
\[
\beta(t)=\frac{d \gamma}{dt}(t)\cdot \mu(t)=t\sqrt{4+9t^2}.
\]
Since the radius function $\lambda$ is constant, it follows
$\frac{d\lambda}{dt}(t)=0$.    Thus, for any $t\in \mathbb{R}$,
the unit vector $\widetilde{\nu}(t)$ satisfying
\[
\frac{d\lambda}{dt}(t) = -\beta(t)\left(\widetilde{\nu}(t)\cdot \mu(t)\right) 
\]
always exists.   Namely we have
\[
\widetilde{\nu}(t)=\pm\nu(t)=\frac{\pm 1}{\sqrt{4+9t^2}}\left(2, -3t\right).
\]
Thus, by the assertion (1) of Theorem \ref{theorem1},
$\mathcal{C}_{(\gamma, \lambda)}$ creates an envelope, and
the created envelope $f: \mathbb{R}\to \mathbb{R}^2$
has the following form
by the assertion (2) of Theorem \ref{theorem1}.
\[
f(t)=\gamma(t)+\lambda(t)\widetilde{\nu}(t) =
\left(t^3, t^2\right)\pm\frac{1}{\sqrt{4+9t^2}}\left(2, -3t\right)
= \left(t^3\pm \frac{2}{\sqrt{4+9t^2}}, t^2\mp \frac{3t}{\sqrt{4+9t^2}}\right).
\]
Finally, by the assertion (3-ii) of Theorem \ref{theorem1}, 
there are no other envelopes
created by $\mathcal{C}_{(\gamma, \lambda)}$.
\end{example}
\section{Alternative definitions}\label{section4}
In Definition \ref{definition_envelope} of Section \ref{section1},
the definition of envelope created by the circle family
is given.
In \cite{brucegiblin}, the set consisting of the images of envelopes defined
in Definition \ref{definition_envelope}
is called \textit{$E_2$ envelope} (denoted by $E_2$)
and two alternative definitions
(called \textit{$E_1$ envelope} and \textit{$\mathcal{D}$ envelope})
are given as follows.
\begin{definition}[$E_1$ envelope \cite{brucegiblin}]
{\rm
Let $\gamma: I\to \mathbb{R}^2$, $\lambda: I\to \mathbb{R}_+$
be a frontal and a positive function respectively.
Let $t_0$ be a parameter of $I$ and fix it.
Assume that
\[
\lim_{\varepsilon\to 0}
C_{(\gamma(t_0), \lambda(t_0))}\cap
C_{(\gamma(t_0+\varepsilon), \lambda(t_0+\varepsilon))}
\]
is not the empty set and denote the set by $I(t_0)$.
Take one point $e_1(t_0)=(x(t_0), y(t_0))$ of $I(t_0)$.
Then, the set consisting of the images of smooth mappings
$e_1: I\to \mathbb{R}^2$, if exists, is called
an \textit{$E_1$ envelope} created by the circle family
$\mathcal{C}_{(\gamma, \lambda)}$ and is denoted by $E_1$.
}
\end{definition}
\begin{definition}[$\mathcal{D}$ envelope \cite{brucegiblin}]
{\rm
Let $\gamma: I\to \mathbb{R}^2$, $\lambda: I\to \mathbb{R}_+$
be a frontal and a positive function respectively.
Set
\[
F(x, y, t)=||(x, y)-\gamma(t)||^2-\left(\lambda(t)\right)^2.
\]
Then, the following set is called
the \textit{$\mathcal{D}$ envelope} created by the circle family
$\mathcal{C}_{(\gamma, \lambda)}$ and is denoted by $\mathcal{D}$.
\[
\left\{(x, y)\in \mathbb{R}^2\, |\,
\exists t\in I \mbox{ such that } F(x, y, t)=\frac{\partial F}{\partial t}(x, y, t)=0
\right\}.
\]
}
\end{definition}
Concerning the relationships among $E_1$, $E_2$ and $\mathcal{D}$
for a given circle
family $\mathcal{C}_{(\gamma, \lambda)}$,
the following is known.
\begin{fact}[\cite{brucegiblin}]
$E_1\subset \mathcal{D}$ and $E_2\subset \mathcal{D}$.
\end{fact}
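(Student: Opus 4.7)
The plan is to treat the two inclusions separately, since the definitions of $E_1$ and $E_2$ have rather different flavors (one analytic/limit-based, the other a first-order differential condition on the parametrization), but both reduce to verifying the two defining equations $F=0$ and $\partial F/\partial t = 0$ with $F(x,y,t)=\|(x,y)-\gamma(t)\|^2-\lambda(t)^2$. The common idea is that differentiating $F(f(t),t)=0$ along $t$ gives a relation that mixes $f'(t)\cdot(f(t)-\gamma(t))$ with $\gamma'(t)\cdot(f(t)-\gamma(t))$ and $\lambda(t)\lambda'(t)$; the envelope conditions will cancel precisely the right terms.

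For $E_2\subset\mathcal{D}$, I would pick an envelope $f:I\to\mathbb{R}^2$ in the sense of Definition \ref{definition_envelope} and a point $(x,y)=f(t)$. Condition (2) gives $F(f(t),t)=0$ outright. Differentiating $\|f(t)-\gamma(t)\|^2=\lambda(t)^2$ yields
\[
(f'(t)-\gamma'(t))\cdot(f(t)-\gamma(t))=\lambda(t)\lambda'(t),
\]
and then condition (1) kills the $f'(t)$ piece, leaving $\gamma'(t)\cdot(f(t)-\gamma(t))=-\lambda(t)\lambda'(t)$. A direct computation of $\partial F/\partial t$ at $(f(t),t)$ gives $-2\gamma'(t)\cdot(f(t)-\gamma(t))-2\lambda(t)\lambda'(t)$, which vanishes by the identity just derived. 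Hence $f(t)\in\mathcal{D}$, and this step is entirely routine.

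For $E_1\subset\mathcal{D}$, fix $t_0\in I$ and $e_1(t_0)\in I(t_0)$. By the definition of $I(t_0)$, there is a sequence $\varepsilon_n\to 0$ and points $p_n\in C_{(\gamma(t_0),\lambda(t_0))}\cap C_{(\gamma(t_0+\varepsilon_n),\lambda(t_0+\varepsilon_n))}$ with $p_n\to e_1(t_0)$. For each $n$ we have $F(p_n,t_0)=0$ and $F(p_n,t_0+\varepsilon_n)=0$, so the mean value theorem applied to the smooth function $t\mapsto F(p_n,t)$ supplies $\tau_n\in(t_0,t_0+\varepsilon_n)$ with $\frac{\partial F}{\partial t}(p_n,\tau_n)=0$. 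Letting $n\to\infty$ and using continuity of $F$ and $\partial F/\partial t$ gives $F(e_1(t_0),t_0)=0$ and $\frac{\partial F}{\partial t}(e_1(t_0),t_0)=0$, so $e_1(t_0)\in\mathcal{D}$.

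The only place that requires a little care is the $E_1$ argument: one must read the limit in the definition of $I(t_0)$ as a Kuratowski/sequential limit so that the approximating intersection points $p_n$ actually exist and converge to $e_1(t_0)$. Once this is granted, both inclusions follow from the mean value theorem and direct differentiation; there is no real geometric obstacle, and neither inclusion uses the frontal hypothesis on $\gamma$ beyond smoothness.
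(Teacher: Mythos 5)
Your two arguments are correct. Note, though, that the paper offers no proof of this Fact at all: it is quoted from Bruce--Giblin, so there is no internal proof to compare against. What you wrote is essentially the classical Bruce--Giblin argument, and it is consistent with the machinery the paper does develop elsewhere: your $E_2\subset\mathcal{D}$ computation (differentiate $\|f(t)-\gamma(t)\|^2=\lambda(t)^2$ and cancel the $f'$ term using $f'(t)\cdot(f(t)-\gamma(t))=0$) is exactly the computation appearing in the converse half of the proof of assertion (1) of Theorem \ref{theorem1}, written there in the frame $\{\mu,\nu\}$ as $\beta(t)\bigl(\mu(t)\cdot\widetilde{\nu}(t)\bigr)+\frac{d\lambda}{dt}(t)=0$; and your Rolle/mean-value argument on $t\mapsto F(p_n,t)$ with intersection points $p_n\to e_1(t_0)$ parallels the sequential subtraction-and-limit argument used in the first half of the proof of Theorem \ref{theorem2}, which pushes the same idea further to land in $E_2$ rather than merely in $\mathcal{D}$. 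Your care about reading the limit defining $I(t_0)$ as a sequential (Kuratowski-type) limit is warranted, since the paper's Definition of $E_1$ is stated loosely, and your observation that neither inclusion uses the frontal or creativity hypotheses is accurate.
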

In this section, we study more precise relationships among
$E_1$, $E_2$ and $\mathcal{D}$.
\subsection{The relationship
between $E_1$ and $E_2$}\label{subsection4.1}
We first establish the relationship
between $E_1$ and $E_2$ as follows.
\begin{theorem}\label{theorem2}
$E_1=E_2$.
\end{theorem}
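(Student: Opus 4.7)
The plan is to derive an explicit characterization of the limit set $I(t_0)$ and then observe that membership in $I(t)$ for all $t$ is exactly equivalent to satisfying conditions (1) and (2) of Definition \ref{definition_envelope}. Once this equivalence is in hand, the equality $E_1 = E_2$ follows immediately.

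First I would compute $I(t_0)$ explicitly. Subtracting the two defining circle equations $\|P - \gamma(t_0)\|^2 = \lambda(t_0)^2$ and $\|P - \gamma(t_0 + \varepsilon)\|^2 = \lambda(t_0 + \varepsilon)^2$, dividing the difference by $\varepsilon$, and letting $\varepsilon \to 0$ produces the linear equation
\[
(P - \gamma(t_0)) \cdot \frac{d\gamma}{dt}(t_0) = -\lambda(t_0)\frac{d\lambda}{dt}(t_0).
\]
Combined with $\|P-\gamma(t_0)\| = \lambda(t_0)$, this pair of equations cuts out $I(t_0)$ as the intersection of a circle and an affine line, containing $0$, $1$, or $2$ points.

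For $E_2 \subset E_1$, given an envelope $f: I \to \mathbb{R}^2$ in the sense of Definition \ref{definition_envelope}, I would differentiate the identity $\|f(t) - \gamma(t)\|^2 = \lambda(t)^2$ to obtain $(f(t) - \gamma(t)) \cdot (f'(t) - \gamma'(t)) = \lambda(t)\lambda'(t)$, and then substitute the orthogonality condition (1), that is, $(f(t) - \gamma(t)) \cdot f'(t) = 0$. This yields exactly the linear equation characterizing $I(t)$, so $f(t) \in I(t)$ for every $t$, and the smooth mapping $f$ itself serves as the parametrization required by the definition of $E_1$. For the reverse inclusion $E_1 \subset E_2$, any smooth $e_1: I \to \mathbb{R}^2$ with $e_1(t) \in I(t)$ satisfies (2) tautologically, and running the same algebra in reverse (differentiate the circle condition, apply the linear constraint) recovers $(e_1(t) - \gamma(t)) \cdot e_1'(t) = 0$, which is condition (1).

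The main obstacle is making the characterization of $I(t_0)$ in the first step rigorous. The inclusion of $I(t_0)$ into the zero-set of the derived equations is immediate from continuity of the divided differences. The reverse inclusion, needed so that an $E_2$ envelope $f$ lands in the genuine limit set rather than merely in a set that contains it, requires a short transversality argument: for a point satisfying the two derived equations, one must exhibit it as a limit of actual intersection points of nearby circles, which follows by applying the implicit function theorem to the pair of circle equations at the given point, the Jacobian being non-degenerate away from tangency.
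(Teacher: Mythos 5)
Your first inclusion, $E_1\subset E_2$, is correct and follows essentially the paper's computation: subtracting the two circle equations, dividing by $\varepsilon$ and passing to the limit gives the linear relation $\left(e_1(t)-\gamma(t)\right)\cdot\frac{d\gamma}{dt}(t)=-\lambda(t)\frac{d\lambda}{dt}(t)$ at every parameter. Your way of finishing is in fact a little cleaner than the paper's: you differentiate $||e_1(t)-\gamma(t)||^2=\lambda(t)^2$ and combine it with this relation to verify conditions (1) and (2) of Definition \ref{definition_envelope} directly for the given smooth selection $e_1$, instead of appealing to the construction inside the proof of the assertion (1) of Theorem \ref{theorem1}.

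The other inclusion is where your proposal has a genuine gap. To put $f(t_0)$ into $I(t_0)$ you must produce actual intersection points of $C_{(\gamma(t_0),\lambda(t_0))}$ and $C_{(\gamma(t_0+\varepsilon),\lambda(t_0+\varepsilon))}$ converging to $f(t_0)$, and your implicit function theorem argument does this only when $f(t_0)-\gamma(t_0)$ and $\frac{d\gamma}{dt}(t_0)$ are linearly independent, i.e. only when $\beta(t_0)\neq 0$ and $|\frac{d\lambda}{dt}(t_0)|<|\beta(t_0)|$. The excluded parameters are not a negligible boundary case: tangency, $\frac{d\lambda}{dt}(t_0)=\pm\beta(t_0)$ (equivalently $\widetilde{\nu}(t_0)=\pm\mu(t_0)$), is exactly the unique-envelope regime (3-i) of Theorem \ref{theorem1} --- in Example \ref{example4} every parameter is of this type --- and $\beta(t_0)=0$ is the (3-$\infty$) regime; moreover, since $E_1$ is made of images of globally defined smooth selections, you need $f(t)\in I(t)$ for every $t$, so the bad parameters cannot simply be discarded. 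Worse, no local transversality argument can close this case, because at a tangency parameter the existence of nearby intersection points is not determined by the limiting configuration: for $\gamma(t)=(\cos t,\sin t)$ and $\lambda(t)=10+t$ on $I=(-1,1)$ the family is creative with $\widetilde{\nu}=-\mu$, every parameter is a tangency parameter, and for small $\varepsilon\neq 0$ the two circles are nested and disjoint. So your identification of $I(t_0)$ with the full zero set of $F=\frac{\partial F}{\partial t}=0$ fails precisely where it is needed (it also fails when $\beta(t_0)=\frac{d\lambda}{dt}(t_0)=0$, where the zero set is the whole circle). The paper proves $E_2\subset E_1$ by a different mechanism: it writes $f(t)=\gamma(t)+\lambda(t)\widetilde{\nu}(t)$ by the assertion (2) of Theorem \ref{theorem1}, observes that $f$ is an envelope of the tangent line family $\mathcal{L}_{(f,\widetilde{\nu})}$, invokes the $E_1=E_2$ result for straight line families from \cite{nishimura}, and then transfers the limiting intersection points from the lines to the circles. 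Your proposal would need either that detour or a separate argument covering the tangential and singular parameters.
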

\begin{proof}
We first show $E_1\subset E_2$.
Let $t_0$ be a parameter of $I$ and let
$\left\{t_i\right\}_{i = 1, 2, \ldots}$ be a sequence of $I$ conversing
to $t_0$.
Take a point $(x(t_0), y(t_0))$ of $E_1$.
Then, we may assume that a point $(x(t_i), y(t_i))$
is taken from the intersection of two circles
$C(\gamma(t_i), \lambda(t_i))\cap C(\gamma(t_0), \lambda(t_0))$
and satisfies
\[
\lim_{t_i\to t_0}(x(t_i), y(t_i))=(x(t_0), y(t_0)).
\]
Then, we have the following.
\begin{eqnarray}
||(x(t_i), y(t_i))-\gamma(t_i)||^2 & = & \left(\lambda(t_i)\right)^2 \\
||(x(t_i), y(t_i))-\gamma(t_0)||^2 & = & \left(\lambda(t_0)\right)^2.
\end{eqnarray}
For $j=0, 1, 2, \ldots$,
set $\gamma(t_j)=\left(\gamma_x(t_j), \gamma_y(t_j)\right)$.
Subtracting (2) from (1) yields the following.
\begin{eqnarray*}
{ } & { } &
-2\left(
x(t_i)\left(\gamma_x(t_i)-\gamma_x(t_0)\right)
+
y(t_i)\left(\gamma_y(t_i)-\gamma_y(t_0)\right)
\right)
+\left(\gamma_x(t_i)\right)^2-\left(\gamma_x(t_0)\right)^2
+\left(\gamma_y(t_i)\right)^2-\left(\gamma_y(t_0)\right)^2 \\
{ } & = & \left(\lambda(t_i)\right)^2-\left(\lambda(t_0)\right)^2.
\end{eqnarray*}
Since $\lim_{i\to \infty}t_i=t_0$ and
$\lim_{t_i\to t_0}(x(t_i), y(t_i))=(x(t_0), y(t_0))$,
this equality implies
\[
-2\left(x(t_0)\frac{d\gamma_x}{dt}(t_0)+y(t_0)\frac{d\gamma_y}{dt}(t_0)\right)
+2\left(\gamma_x(t_0)\frac{d\gamma_x}{dt}(t_0)+
\gamma_y(t_0)\frac{d\gamma_y}{dt}(t_0)\right)
=2\lambda(t_0)\frac{d\lambda}{dt}(t_0).
\]
Hence we have
\[
-\frac{1}{\lambda(t_0)}\left({x(t_0)-\gamma_x(t_0)},
{y(t_0)-\gamma_y(t_0)} \right)
\cdot
\left(\frac{d\gamma_x}{dt}(t_0), \frac{d\gamma_y}{dt}(t_0)\right)
=
\frac{d\lambda}{dt}(t_0).
\]
Notice that the vector
$\frac{1}{\lambda(t_0)}\left({x(t_0)-\gamma_x(t_0)},
{y(t_0)-\gamma_y(t_0)} \right)=
\frac{1}{\lambda(t_0)}\left((x(t_0), y(t_0))-\gamma(t_0)\right)$
is a unit vector and
$\left(\frac{d\gamma_x}{dt}(t_0), \frac{d\gamma_y}{dt}(t_0)\right) =
\beta(t_0)\mu(t_0)$.   Thus
the creative condtion is satisfied at $t=t_0$.
Therefore, by the proof of the assertion (1) of Theorem \ref{theorem1}, 
the point
$\left(x(t_0), y(t_0)\right)$ must belong to $E_2$.
\par
\smallskip
Conversely,  
suppose that the circle family
$\mathcal{C}_{(\gamma, \lambda)}$ creates an $E_2$ envelope
$f: I\to \mathbb{R}^2$.
By the assertion (2) of Theorem \ref{theorem1}, 
$f$ has the following representation.
\[
f(t)=\gamma(t)+\lambda(t)\widetilde{\nu}(t).
\]
For a point $P\in \mathbb{R}^2$ and a unit vector ${\bf v}\in S^1$,
the straight line $L(P, {\bf v})$ is naturally defined as follows.
\[
{L}_{(P, {\bf v})}=\left\{
(x, y)\in \mathbb{R}^2\, |\, \left((x, y)-P\right)\cdot {\bf v}=0
\right\}.
\]
Then, since
\[
\frac{d f}{dt}(t)\cdot \widetilde{\nu}(t)=
\left(\frac{d\gamma}{dt}(t)+\frac{d\lambda}{dt}(t)\cdot\widetilde{\nu}(t)+
\lambda(t)\frac{d\widetilde{\nu}}{dt}(t)\right)\cdot\widetilde{\nu}(t)
=\frac{d\gamma}{dt}(t)\cdot \widetilde{\nu}(t)+\frac{d\lambda}{dt}(t)
=\beta(t)\left(\mu(t)\cdot \widetilde{\nu}(t)\right)+\frac{d\lambda}{dt}(t)=0,
\]
$f$ is an $E_2$ envelope created by the straight line family
\[
\mathcal{L}_{(f, \widetilde{\nu})}=\left\{
L_{(f(t), \widetilde{\nu}(t))}
\right\}_{t\in \mathbb{R}}.
\]
Take one parameter $t_0\in I$ and let $\{t_i\}_{i=1, 2, \ldots }\subset I$
be a sequence converging to $t_0$.
Since for the straight line family  $\mathcal{L}_{(f, \widetilde{\nu})}$
the image of 
$E_2$ envelope is the same as $E_1$ envelope
(see the assertion (c) of Theorem 1 in \cite{nishimura}),
for any sufficiently large $i\in \mathbb{N}$ there exists a point
\[
\left(x(t_i), y(t_i)\right)\in
L_{(f(t_0), \widetilde{\nu}(t_0))}\cap L_{(f(t_i), \widetilde{\nu}(t_i))}
\]
such that $\lim_{i\to \infty}\left(x(t_i), y(t_i)\right)=f(t_0)$.
Hence for any sufficiently large $i\in \mathbb{N}$ there must exist a point
\[
\left(\widetilde{x}(t_i), \widetilde{y}(t_i)\right)\in
C_{(\gamma(t_0), \lambda(t_0))}\cap C_{(\gamma(t_i), \lambda(t_i))}
\]
such that $\lim_{i\to \infty}\left(\widetilde{x}(t_i), \widetilde{y}(t_i)\right)=f(t_0)$
(see Figure \ref{figure_theorem2}).
\begin{figure}[h]
\begin{center}
\includegraphics[width=10cm]
{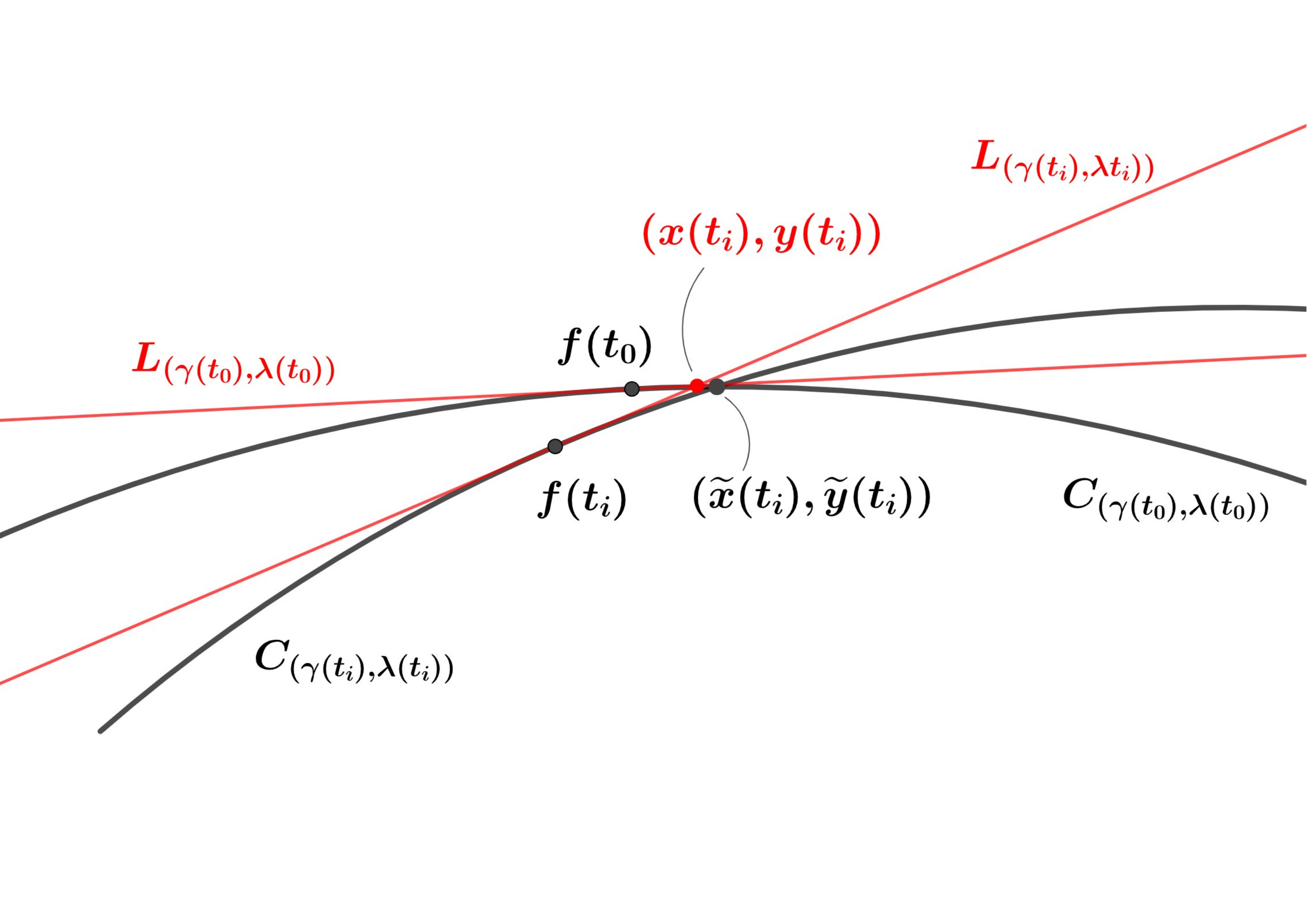}
\caption{Existence of
$\left(\widetilde{x}(t_i), \widetilde{y}(t_i)\right)\in
C_{(\gamma(t_0), \lambda(t_0))}\cap C_{(\gamma(t_i), \lambda(t_i))}
$ satisfying 
$\lim_{i\to \infty}\left(\widetilde{x}(t_i), \widetilde{y}(t_i)\right)=f(t_0)$.
}
\label{figure_theorem2}
\end{center}
\end{figure}
 Therefore, the point
$f(t_0)\in \mathbb{R}^2$ belongs to $E_1$.
Since $f$ is an arbitrary envelope created by
$\mathcal{C}_{(\gamma, \lambda)}$ and
$t_0$ is an arbitrary parameter in $I$,  it follows that $E_2\subset E_1$.
\end{proof}
\subsection{A relationship
between $E_2$ and $\mathcal{D}$}\label{subsection4.2}
In this subsection, we prove the following theorem which asserts that
$\mathcal{D}=E_2$ if and only if $\gamma: I\to \mathbb{R}^2$ is non-singular,
and $\mathcal{D}$ contains not only $E_2$ but also the circle
$C_{(\gamma(t), \lambda(t))}$ at a singular point $t$ of $\gamma$
when $\gamma$ is singular.
\begin{theorem}\label{theorem3}
Let $\gamma: I\to \mathbb{R}^2$, $\lambda: I\to \mathbb{R}_+$
be a frontal and a positive function respectively.
Suppose that the circle family
$\mathcal{C}_{(\gamma, \lambda)}$ is creative.
Then, the following holds.
\[
\mathcal{D}=E_2\cup
\left(\bigcup_{t\in \Sigma(\gamma)}C_{(\gamma(t), \lambda(t))}\right).
\]
Here, $\Sigma(\gamma)$ stands for the set consisting of
singular points of $\gamma: I\to \mathbb{R}^2$.
\end{theorem}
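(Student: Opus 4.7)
The plan is to re-express the defining condition of $\mathcal{D}$ in the frontal/creative language so that it can be compared directly with the envelope parametrization of Theorem \ref{theorem1}(2). I would first compute
\[
\frac{\partial F}{\partial t}(x,y,t) = -2\bigl((x,y)-\gamma(t)\bigr)\cdot\frac{d\gamma}{dt}(t) - 2\lambda(t)\frac{d\lambda}{dt}(t),
\]
use $\frac{d\gamma}{dt}(t)=\beta(t)\mu(t)$, and write a point of $C_{(\gamma(t),\lambda(t))}$ as $(x,y)=\gamma(t)+\lambda(t)\widetilde{\nu}_0$ with $\widetilde{\nu}_0\in S^1$. The pair $F(x,y,t)=0$ and $\frac{\partial F}{\partial t}(x,y,t)=0$ is then equivalent to the pointwise creative identity
\[
\frac{d\lambda}{dt}(t)=-\beta(t)\bigl(\widetilde{\nu}_0\cdot\mu(t)\bigr).
\]

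For the inclusion $\supset$, the containment $E_2\subset \mathcal{D}$ is the cited Fact. For the remaining piece, I would fix $t\in \Sigma(\gamma)$, so $\beta(t)=0$; the creativity of $\mathcal{C}_{(\gamma,\lambda)}$ automatically forces $\frac{d\lambda}{dt}(t)=0$, and hence the pointwise identity above is satisfied by \emph{every} unit vector $\widetilde{\nu}_0$. Consequently each point of $C_{(\gamma(t),\lambda(t))}$ witnesses membership in $\mathcal{D}$ through the parameter $t$.

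For the inclusion $\subset$, let $(x,y)\in\mathcal{D}$ be produced by some parameter $t_*\in I$ and some unit vector $\widetilde{\nu}_0$. If $t_*\in \Sigma(\gamma)$ the point is in the second term by definition, so the substantive case is $t_*\notin \Sigma(\gamma)$, where I must exhibit an envelope $f:I\to\mathbb{R}^2$ with $f(t_*)=(x,y)$. Creativity supplies one global creator $\widetilde{\nu}^*:I\to S^1$, and the key device is the pointwise reflection across the $\mu$-direction,
\[
\widetilde{\nu}^{*}_{\mathrm{ref}}(t):=\widetilde{\nu}^{*}(t)-2\bigl(\widetilde{\nu}^{*}(t)\cdot\nu(t)\bigr)\nu(t),
\]
which is smooth on $I$, takes values in $S^1$, and has the same $\mu(t)$-component as $\widetilde{\nu}^{*}(t)$ because $\mu(t)\cdot\nu(t)=0$; therefore $\widetilde{\nu}^{*}_{\mathrm{ref}}$ is again a global creator. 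By Theorem \ref{theorem1}(2), both $\widetilde{\nu}^{*}$ and $\widetilde{\nu}^{*}_{\mathrm{ref}}$ generate envelopes. At $t_*$, $\widetilde{\nu}_0$ and $\widetilde{\nu}^{*}(t_*)$ share the same $\mu(t_*)$-component and both lie in $S^1$, so $\widetilde{\nu}_0$ must equal $\widetilde{\nu}^{*}(t_*)$ or $\widetilde{\nu}^{*}_{\mathrm{ref}}(t_*)$; in either case $(x,y)\in E_2$.

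The main obstacle I anticipate is precisely the global extension step hidden in this last case: the pointwise creative datum at one parameter does not a priori extend to a smooth creator on all of $I$, and a naive patching by bump functions would have to respect the rigid constraint $\widetilde{\nu}(t)\cdot\mu(t)=-\frac{d\lambda/dt(t)}{\beta(t)}$, which at a regular point pins $\widetilde{\nu}(t)$ to one of exactly two unit vectors. The reflection trick circumvents this difficulty by producing the second global creator in closed form from the first, so no local interpolation is required. I would also sanity-check the statement against an example such as $\gamma(t)=(t^3,0)$, $\lambda\equiv 1$, where the singular-circle term is genuinely necessary because points of $C_{(\gamma(t),\lambda(t))}$ with $t\in \Sigma(\gamma)$ need not lie on any smooth envelope.
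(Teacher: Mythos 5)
Your proof is correct and follows essentially the same route as the paper: rewrite $F=\partial F/\partial t=0$ as the pointwise creative identity $\frac{d\lambda}{dt}(t)=-\beta(t)\left(\widetilde{\nu}_0\cdot\mu(t)\right)$ and split parameters into regular and singular ones, the singular fibers contributing entire circles since $\beta(t_0)=0$ forces $\frac{d\lambda}{dt}(t_0)=0$. The only difference is that where the paper disposes of the regular-parameter case by simply invoking the proof of assertion (1) of Theorem \ref{theorem1}, you make that step explicit via the reflected creator $\widetilde{\nu}^{*}(t)-2\left(\widetilde{\nu}^{*}(t)\cdot\nu(t)\right)\nu(t)$, a clean closed-form justification that both pointwise solutions at a regular parameter are values of global creators and hence land in $E_2$.
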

\begin{proof}
Recall that
\[
\mathcal{D}=
\left\{(x, y)\in \mathbb{R}^2\, |\,
\exists t\in I \mbox{ such that } F(x, y, t)=\frac{\partial F}{\partial t}(x, y, t)=0
\right\}.
\]
Let $\left(x_0, y_0\right)$ be a point of $\mathcal{D}$.
Since $F(x, y, t)=||(x, y)-\gamma(t)||^2 - |\lambda(t)|^2$,
it follows that there exists a $t_0\in I$ 
such that the following (a) and (b) are satisfied.
\begin{enumerate}
\item[(a)] 
$\left((x_0, y_0)-\gamma(t_0)\right)
\cdot \left((x_0, y_0)-\gamma(t_0)\right)-\left(\lambda(t_0)\right)^2=0$.
\item[(b)] $\frac{d\left(\left((x_0, y_0)-\gamma(t_0)\right)
\cdot \left((x_0, y_0)-\gamma(t_0)\right)-\left(\lambda(t_0)\right)^2
\right)}{dt}=0.$
\end{enumerate}
The condition (a) implies that there exists a 
unit vector $\nu_1(t_0)\in S^1$ 
such that
\[
\left(x_0, y_0\right)=\gamma(t_0)-\lambda(t_0)\nu_1(t_0).
\]
The condition (b) implies 
\[
\frac{d\gamma}{dt}(t_0)\cdot \left(\left(x_0, y_0\right)-\gamma(t_0)\right)
-
\frac{d\lambda}{dt}(t)\lambda(t_0)=0.
\]
Since $\frac{d\gamma}{dt}(t_0)=\beta(t_0)\mu(t_0)$, just by substituting,
we have the following. 
\[
\lambda(t_0)\left(
\beta(t_0)\left(\mu(t_0)\cdot\nu_1(t_0)\right)+\frac{d\lambda}{dt}(t_0)
\right)=0.
\]
Since $\lambda(t)>0$ for any $t\in I$, it follows
\[
\frac{d\lambda}{dt}(t_0)=-\beta(t_0)\left(\mu(t_0)\cdot\nu_1(t_0)\right).
\]
\par
On the other hand, since $\mathcal{C}_{(\gamma, \lambda)}$ is creative,
there must exist a smooth unit vector field
$\widetilde{\nu}: I\to S^1$ along $\gamma: I\to \mathbb{R}^2$ such that
\[
\frac{d\lambda}{dt}(t)=-\beta(t)\left(\mu(t)\cdot\widetilde{\nu}(t)\right)
\]
for any $t\in I$.
Suppose that the parameter $t_0\in I$ is a regular point of $\gamma$.
Then, $\beta(t_0)\ne 0$.   
Therefore, by the proof of the assertion (1) of Theorem \ref{theorem1}, 
it follows
\[
\left(x_0, y_0\right)\in E_2.
\]
Suppose that the parameter $t_0\in I$ is a singular point of $\gamma$.
Then, $\beta(t_0)=0$. 
Thus, for any unit vector
${\bf v}\in S^1$, the following holds. 
\[
\frac{d\lambda}{dt}(t_0)=-\beta(t_0)\left(\mu(t_0)\cdot{\bf v}\right).
\]
Hence, at the singular point $t_0\in I$ of $\gamma$, we may choose
any unit vector ${\bf v}\in S^1$ as the unit vector $\nu_1(t_0)$.
Therefore, 
it follows 
\[
\mathcal{D}_0= C_{(\gamma(t), \lambda(t))}, 
\]
where $
\mathcal{D}_0=
\left\{(x, y)\in \mathbb{R}^2\, |\,
F(x, y, t_0)=\frac{\partial F}{\partial t}(x, y, t_0)=0
\right\}.
$
\end{proof}
\section*{Acknowledgement}
{
~~~This work was supported 
by the Research Institute for Mathematical Sciences, 
a Joint Usage/Research Center located in Kyoto University. 
\par
The first author is supported by the National Natural Science Foundation of
China (Grant No. 12001079), 
Fundamental Research Funds for the Central Universities 
(Grant No. 3132023205) and China Scholarship Council.     
The second author is 
supported by JSPS KAKENHI (Grant No. 23K03109).   
}
%
%

\end{document}